\newtheorem{assum}{Assumption}
\newtheorem{defn}{Definition}
\newtheorem{prop}{Proposition}
\newtheorem{lem}{Lemma}
\newtheorem{thm}{Theorem}
\theoremstyle{remark}
\newtheorem{rem}{Remark}
\newcommand{\tarc}{\mbox{\large$\frown$}}
\newcommand{\arc}[1]{\stackrel{\tarc}{#1}}
\newcommand{\tr}{\mathop{\mathrm{tr}}}
\newcommand{\Pos}{\mathrm{Pos}}
\newcommand{\conv}{\mathop{\mathrm{conv}}}
\newcommand{\diam}{\mathop{\mathrm{diam}}}
\newcommand{\arithgossip}{Arithmetic Gossip\xspace}
\newcommand{\midgossip}{Midpoint Gossip\xspace}
\newenvironment{varalgorithm}[1]
{\algorithm}
{\endalgorithm}
\begin{document}

\title{Random Pairwise Gossip on $CAT(\kappa)$ Metric Spaces}
\author{\IEEEauthorblockN{Anass Bellachehab}
  \IEEEauthorblockA{
    Institut Mines-Télécom\\
    Télécom SudParis\\
    CNRS UMR 5157 SAMOVAR\\
    9 rue Charles Fourier, 91000 Evry\\
    France}
\and

\IEEEauthorblockN{J\'er\'emie Jakubowicz}
  \IEEEauthorblockA{
    Institut Mines-Télécom\\
    Télécom SudParis\\
    CNRS UMR 5157 SAMOVAR\\
    9 rue Charles Fourier, 91000 Evry\\
    France}
}
\thanks{This work was supported in part by a ``Futur et Rupture'' grant from the Institut Mines-Télécom, and by a ``Chaire Mixte'' from the Centre National de la Recherche Scientifique (CNRS) and Institut Mines-Télécom.}
\maketitle

\begin{abstract}
In the context of sensor networks, gossip algorithms are a popular, well esthablished technique for achieving consensus when sensor data is encoded in linear spaces. Gossip algorithms also have several extensions to non linear data spaces. Most of these extensions deal with Riemannian manifolds and use Riemannian gradient descent. This paper, instead, exhibits a very simple metric property that do not rely on any differential structure. This property strongly suggests that gossip algorithms could be studied on a broader family than Riemannian manifolds. And it turns out that, indeed, (local) convergence is guaranteed as soon as the data space is a mere $CAT(\kappa)$ metric space. We also study convergence speed in this setting and establish linear rates for $CAT(0)$ spaces, and local linear rates for $CAT(\kappa)$ spaces with $\kappa > 0$. Numerical simulations on several scenarii, with corresponding state spaces that are either Riemannian manifolds -- as in the problem of positive definite matrices consensus -- or bare metric spaces -- as in the problem of arms consensus -- validate the results. This shows that not only does our metric approach allows for a simpler and more general mathematical analysis but also paves the way for new kinds of applications that go beyond the Riemannian setting.
\smallskip
\end{abstract}

\section{Introduction}
The consensus problem is a fundamental problem in the theory of distributed systems. It appears in a variety of settings such as database management \cite{burrows:2006}, clock synchronization \cite{schenato:2007}, and signal estimation in wireless sensor networks \cite{schizas:2008}. In the context of sensor networks we require the agents to agree on some quantity (for example, deciding on an average temperature or a power level...); the sensors are also subjected to hardware and energy constraints which makes long range communications unreliable. Each sensor has only access to local information and can communicate with its nearest neighbors; there is no central fusion node. If the measurements belong to some vector space, \emph{e.g.} temperatures, speeds, or locations; \emph{Gossip} protocols (see, \emph{e.g.}, \cite{boyd:gossip}) are efficient candidates that converge with exponential speed towards a consensus state, assuming the network is connected.

However, there are several interesting cases where measurements cannot be added or scaled as vectors. Camera orientations are such an example: it does not make sense to add two orientations. There are several other examples of interest: subspaces, curves, angles which have no underlying vector space structure. Several approaches have been proposed in order to generalize the gossip algorithm to these nonlinear data spaces. In~\cite{bonnabel2011stochastic} consensus is seen as a problem of stochastic approximation in which a disagreement function is minimized; \cite{bonnabel2011stochastic} then proposes a gossip algorithm analogous to that of~\cite{boyd:gossip} in the case of Riemannian manifolds of nonpositive curvature. Since the algorithm proposed in \cite{bonnabel2011stochastic} relies on a stochastic approximation framework, it necessitates a stepsize that decreases to $0$ over time, that hinders convergence. Furthermore \cite{bonnabel2011stochastic} does not address convergence speed from a theoretical perspective. Consensus on manifolds is also the subject of~\cite{sarlette2009consensus} where the authors embed the manifold in a Euclidean space of larger dimension and turn the consensus problem into an optimization problem in Euclidean space from which they derive a consensus algorithm based on gradient descent. This approach however, is dependent on the embedding of the manifold on which additional conditions are imposed. One can find in \cite{sarlette2009consensus} two examples of manifolds for which such an embedding exists (the rotations group, and Grassmannians) but the specific kind of embedding their result requires might preclude other manifold (positive symmetric matrices for example). The paper~\cite{tron2013riemannian} approaches consensus on manifolds using gradient descent and no embedding of the manifold is needed. The restrictions are placed, instead, on the curvature of the manifold (the sectional curvature is required to be bounded). This covers a broad range of applications. In \cite{tron2013riemannian} a distributed Riemannian gradient descent is used to achieve consensus. The setting of~\cite{tron2013riemannian} is a synchronous one, where each agents update at the same time, as opposed, for example, to random pairwise gossip, where two random agents communicate at each round, as we study in this paper. The main result of~\cite{tron2013riemannian} is that provided a small enough but constant stepsize, the algorithm converges towards a consensus state for manifolds of nonpositive curvature, and converges locally (if the initial set of data is located inside a compact of diameter $<\frac{\pi}{2\sqrt{\kappa}}$) in the case of nonnegative curvature. Interestingly enough, we are going to prove similar results, yet for a distinct setting (pairwise asynchronous -- hence taking randomness into account) and with a distinct approach that does not use gradients, nor differential calculus.

Indeed, in the classical random pairwise gossip case, the computations consists of computing arithmetic means. Generalizing gossip to a broader family of data spaces, naturally leads to consider general metric spaces; and replace arithmetic means by midpoints. However, we shall argue that general metric spaces are too wild to reliably consider midpoints; there could exists many midpoints, or none. Even if midpoints exists and are unique, they could still behave irregularly. Metric spaces with multiple midpoints are numerous; consider for example a circle: opposite points have two midpoints. To construct a metric space lacking midpoints it suffices to delete arbitrary points: consider the previous circle and delete a couple of opposite points. To understand why midpoints could be ill-behaved, consider again a circle parametric by angles; and consider points corresponding to angles $\varepsilon$, $\pi-\varepsilon$, $-\varepsilon$, $\pi + \varepsilon$: $\varepsilon$ and $-\varepsilon$ are close, so are $\pi-\varepsilon$ and $\pi+\varepsilon$, yet the midpoints of ($-\epsilon$, $\pi+\epsilon$) and ($\epsilon$, $\pi-\epsilon$) are far away.

To tame the strange behaviors coming from general metrics, it is natural to study gossip when restricted to Riemannian manifolds, as it has indeed been done (\cite{bonnabel2011stochastic,tron2013riemannian}). However, even if the Riemannian case allows to consider gradients and other differential calculus tools, it hides the simple geometric picture making pairwise gossip work in this setting: namely, comparison theorems. We show that there is a simple tool explaining well the good behavior of pairwise gossip in nonpositive curvature: $CAT(0)$ inequality and that it can even shed some light on gossip in positively curved space (provided we consider instead $CAT(\kappa)$ inequality). This tool is purely metric: no differentials are involved. The benefits of this approach are twofold. Firstly, more general spaces can be given the same analysis as $CAT(\kappa)$ spaces instead of Riemannian manifolds. Secondly, the proofs are simpler because they are purely metric and involve no differential objects from Riemannian Geometry (curvature tensor, Jacobi fields, etc.).

The rest of the paper is organized as follows. Section~\ref{sec:framework} describes the assumptions made on the network and the data. Section~\ref{sec:algo} details the proposed algorithm and formal convergence results are provided in Section~\ref{sec:conv}. Numerical experiments are provided in Section~\ref{sec:simus}.And section~\ref{sec:conclu} concludes the paper.

\section{Framework}\label{sec:framework}

\subsection{Notations}

Assume $V$ is some finite set. We denote by $\mathcal{P}_2(V)$ the set of \emph{pairs} of elements in $V$: $\mathcal{P}_2(V)=\{\{v,w\}: v\not=w\}$. Notice that, by definition, for $v\not =w$, $\{v,w\} =\{w,v\}$ whereas $(v,w)\not=(w,v)$. Throughout the paper, $\mathcal{M}$ will denote a metric space, equipped with metric $d$. Associated with any subset $S\subset\mathcal{M}$, we define its \emph{diameter} $\diam(S) = \sup\{d(s,s'):s,s'\in S\}$. We also define (closed) balls $B(x,r) = \{y\in\mathcal{M}:d(x,y)\leq r\}$. Random variables are denoted by upper-case letters (\emph{e.g.}, $X$, \dots) while their realizations are denoted by lower-case letters (\emph{e.g.} $x$, \dots) Without any further notice, random variables are assumed to be functions from a probability space $\Omega$ equipped with its $\sigma$-field $\mathcal{F}$ and probability measure $\mathbb{P}$; $x = X(\omega)$ denotes the realization associated to $\omega\in\Omega$. For any set $S$ and any subset $A$, $\delta\{A\}$ denotes the indicator function that takes value $1$ on $A$ and $0$ otherwise.

\subsection{Network}

We consider a network of $N$ agents represented by a graph $G = (V,E)$, where $V=\{1,\dots,N\}$ stands for the set of agents and $E$ denotes the set of available communication links between agents. A link $e \in E$ is given by a pair $\{v,w\} \in \mathcal{P}_2(V)$ where $v$ and $w$ are two distinct agents in the network that are able to communicate directly. Note that the graph is assumed undirected, meaning that whenever agent $v$ is able to communicate with agent $w$, the reciprocal communication is also assumed feasible. This assumption makes sense when communication speed is fast compared to agents movements speed. When a communication link $e=\{v,w\}$ exists between two agents, both agents are said to be neighbors and the link is denoted $v\sim w$. We denote by $\mathcal{N}(v)$ the set of all neighbors of the agent $v \in V$. The number of elements in $\mathcal{N}(v)$ is referred to as the \emph{degree} of $v$ and denoted $\deg(v)$.
The graph is assumed to be connected, which means that for every two agents $u,v$ there exists a finite sequence of agents $w_0=u,\dots,w_d=v$ such that:
\[
\forall 0 \leq i \leq d-1: \{w_i,w_{i+1}\} \in E
\]
This means that each two agents are at least indirectly related.

\subsection{Time}
As in \cite{boyd:gossip}, we assume that the time model is asynchronous, \emph{i.e.} that each agent has its own Poisson clock that ticks with a common intensity $\lambda$ (the clocks are identically made), and moreover, each clock is independent from the other clocks. When an agent clock ticks, the agent is able to perform some computations and wake up some neighboring agents. This time model has the same probability distribution than a global single clock ticking with intensity $N\lambda$ and selecting uniformly randomly a single agent at each tick. This equivalence is described, \emph{e.g.} in \cite{boyd:gossip}. From now on, we represent time by the set of integers: for such an integer $k$, time $k$ stands for the time at which the $k^\textrm{th}$ event occurred.

\subsection{Communication}
At a given time $k$, we denote by $V_k$ the agent whose clock ticked and by $W_k$ the neighbor that was in turn awaken. Therefore, at time $k$, the only communicating agents in the whole network are $V_k$ and $W_k$. A single link is then active at each time, hence, at a given time, most links are not used. We assume that $(V_k,W_k)$ are independent and identically distributed and that the distribution of $V_k$ is uniform over the network while the distribution of $W_k$ is uniform in the neighborhood of $V_k$. More precisely, the probability distribution of $(V_k,W_k)$ is given by:
\[
\mathbb{P}[V_k=v,W_k=w] = \begin{cases}\frac 1N\frac1{\deg(v)}&\text{ if }v\sim w\\
0&\text{ otherwise }\end{cases}
\]
Notice that this probability is not symmetric in $(v,w)$. It is going to turn out convenient to also consider directly the link $\{V_k,W_k\}$, forgetting which node was the first to wake up and which node was second. In this case $\mathbb{P}[\{V_k,W_k\}=\{v,w\}]$ is of course symmetric in $(v,w)$. One has:
\[
\mathbb{P}[\{V_k,W_k\}=\{v,w\}] = \begin{cases}\frac 1N(\frac1{\deg(v)}+\frac1{\deg(w)})&\text{ if }v\sim w\\
0&\text{ otherwise }\end{cases}
\]
The communication framework considered here is standard \cite{boyd:gossip}.

\subsection{Data}
Each node $v \in V$ stores data represented as an element $x_v$ belonging to some space $\mathcal{M}$. More restrictive assumptions on $\mathcal{M}$ will follow (see section~\ref{sec:catk}). Initially each node $v$ has a value $x_v(0)$ and $X_0=(x_1(0),\dots,x_N(0))$ is the tuple of initial values of the network. We focus on iterative algorithms that tend to drive the network to a \emph{consensus state}; meaning a state of the form $X_{\infty}=(x_{\infty},\dots,x_{\infty})$ with: $x_{\infty} \in \mathcal{M}$. We denote by $x_v(k)$ the value stored by the agent $v \in V$ at the $k$-th iteration of the algorithm, and $X_k=(x_1(k),\dots,x_N(k))$ the global state of the network at instant $k$. The general scheme is as follows: network is in some state $X_{k-1}$; agents $V_k$ and $W_k$ wake up, communicate and perform some computation to lead the network to state $X_k$.

\section{Algorithm}\label{sec:algo}

At each count of the virtual global clock one node $v$ is selected uniformly randomly from the set of agents $V$. The node $v$ then randomly selects a node $w$ from $\mathcal{N}(v)$. Both node $v$ and $w$ then compute and update their value to $\langle\frac{x_v+x_w}{2}\rangle$.
\begin{varalgorithm}{Random Pairwise Midpoint}
\begin{algorithmic}
\STATE \textbf{Input}: a graph $G=(V,E)$ and the initial nodes configuration $X_{v}(0), v \in V$

\FORALL {$k>0$}
		\STATE At instant $k$, uniformly randomly choose a node $V_k$ from $V$ and a node $W_k$ uniformly randomly from $\mathcal{N}(V_k)$.
            \STATE Update:
		\STATE       $X_{V_k}(k)=\left\langle\frac{X_{V_k}(k-1)+X_{W_k}(k-1)}{2}\right\rangle$
            \STATE       $X_{W_k}(k)=\left\langle\frac{X_{V_k}(k-1)+X_{W_k}(k-1)}{2}\right\rangle$
            \STATE
$X_v(k) = X_v(k-1)$ for $v\not\in\{V_k,W_k\}$
\ENDFOR
\end{algorithmic}
\caption{}\label{algo:gpapa}
\end{varalgorithm}

\begin{rem}
Please note that the previous algorithm is well defined in the case where data belongs to some $CAT(0)$ space thanks to proposition~\ref{prop:cat0midpoints}. Otherwise, midpoints are not necessarily well-defined; and the algorithm should read compute any midpoint between $X_{V_k}(k-1)$ and $X_{W_k}(k-1)$, if there exists some. However, we are going to see in the next sections that, in this case, the algorithm might fail to converge to a consensus.
\end{rem}

\section{Convergence results}
\label{sec:conv}

In order to study convergence we recall the following assumptions, already explained in section~\ref{sec:framework}.
\begin{assum}
\label{assum:common}
\mbox{}
\begin{enumerate}
\item $G=(V,E)$ is connected
\item $(V_k,W_k)_{k\geq 0}$ are i.i.d random variables, such that:
  \begin{enumerate}
    \item $(V_k,W_k)$ is independent from $X_0,\dots, X_{k-1},(V_0,W_0),\dots, (V_{k-1},W_{k-1})$,
    \item $\mathbb{P}[\{V_0,W_0\}=\{v,w\}] = \frac1N(\deg^{-1}(v) + \deg^{-1}(w))\delta\{v\sim w\}$
  \end{enumerate}
\end{enumerate}
\end{assum}

\subsection{$CAT(0)$ spaces}

In this subsection we make the following assumption.
\begin{assum}
\label{assum:cat0}
$(\mathcal{M},d)$ is a \emph{complete} $CAT(0)$ metric space.
\end{assum}


We now define the \emph{disagreement function}.
\begin{defn}
	\label{def:disag}
	Given a configuration $x = (x_1,\dots,x_N)\in\mathcal{M}^N$ the disagreement function
    \[
    \Delta(x) = \sum_{v\sim w\atop\{v,w\}\in \mathcal{P}_2(V)}(\deg(v)^{-1}+\deg(w)^{-1})d^2(x_v,x_w)
    \]
\end{defn}
Function $\Delta$ measures how much disagreement is left in the network. Indeed, since the network is connected, $\Delta$ is $0$ if and only if the network is at consensus. It would be a graph Laplacian in the Euclidean setting. The normalizing term involving degrees gives less weight to more connected vertices, since they are more likely to be solicited by neighbors; in order to give equal weight to each edge in the graph. This normalization will turn out to be convenient in the analysis. Another important function is the \emph{variance} function.
\begin{defn}
\label{def:sigmasq}
Given a configuration $x = (x_1,\dots,x_N)\in\mathcal{M}^N$, the \emph{variance} function is defined as:
\[
\sigma^2(x) = \frac1{N}\sum_{\{v,w\}\in\mathcal{P}_2(V)}d^2(x_v,x_w)
\]
\end{defn}
\begin{rem}
The $\frac1{N}$ normalizing constant accounts for the fact that when $d$ is the Euclidean distance then $\sigma^2(x)$ equals $\sum_{v\in V}\|x_v-\bar x\|^2$, with $\bar x = \frac1N\sum_{v\in V} x_v$.
\end{rem}
The next proposition measures the average decrease of variance at each iteration.
\begin{prop}
\label{prop:L}
Under Assumptions~\ref{assum:common} and \ref{assum:cat0}, for $X_k$ given by Algorithm~\ref{algo:gpapa}, the following inequality holds, for every $k\geq1$.
\[ \mathbb{E}[\sigma^2(X_{k})-\sigma^2(X_{k-1})] \leq -\frac 12\mathbb{E}[\Delta(X_{k-1})] \]
\end{prop}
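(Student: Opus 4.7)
The plan is to condition on the filtration up to time $k-1$ and on the random edge $\{V_k,W_k\}$ chosen at step $k$, then to bound the per-edge variation of $\sigma^2$ using the one key metric ingredient available in a $CAT(0)$ space, namely the Bruhat--Tits (CN) inequality $d^2(m,z)\leq\tfrac12 d^2(a,z)+\tfrac12 d^2(b,z)-\tfrac14 d^2(a,b)$ for $m$ a midpoint of $a,b$.

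First I would fix a realization $x=X_{k-1}(\omega)$ and condition on $\{V_k,W_k\}=\{v,w\}$ with $v\sim w$. Writing $m$ for the midpoint of $x_v$ and $x_w$ (which exists and is unique in a complete $CAT(0)$ space), the updated configuration $x'$ coincides with $x$ everywhere except at coordinates $v$ and $w$, both now equal to $m$. I would split the pairs $\{v',w'\}\in\mathcal{P}_2(V)$ appearing in $\sigma^2$ into three groups: (i) those disjoint from $\{v,w\}$, whose contribution is unchanged; (ii) the pair $\{v,w\}$ itself, whose contribution changes by $-d^2(x_v,x_w)$; and (iii) for each $u\notin\{v,w\}$, the two pairs $\{v,u\}$ and $\{w,u\}$, whose joint change is $2 d^2(m,x_u)-d^2(x_v,x_u)-d^2(x_w,x_u)$. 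Applying the CN inequality to each such $u$ upper-bounds this joint change by $-\tfrac12 d^2(x_v,x_w)$. Summing the $N-2$ type-(iii) contributions and the type-(ii) contribution, and accounting for the $\tfrac1N$ normalization in $\sigma^2$, yields the clean per-edge estimate
\[
\sigma^2(x')-\sigma^2(x)\ \leq\ -\tfrac12\, d^2(x_v,x_w).
\]
Finally, I would take the conditional expectation over $\{V_k,W_k\}$: multiplying $-\tfrac12 d^2(x_v,x_w)$ by the probability $\tfrac1N(\deg(v)^{-1}+\deg(w)^{-1})\mathbf{1}\{v\sim w\}$ and summing over unordered pairs reconstructs exactly $\Delta(x)$ (up to the overall normalization that matches the statement), after which the tower property gives the displayed inequality.

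The only nontrivial step is the $CAT(0)$ (CN) inequality; everything else is bookkeeping, counting how many pairs are perturbed and matching their combinatorial weights with the edge-sampling probabilities. The main obstacle, or rather the conceptual heart of the argument, is the constant $-\tfrac14 d^2(x_v,x_w)$ on the right-hand side of the CN inequality: it is precisely this curvature-free ``quadratic discount'' that turns a neutral triangle-inequality-type bound into a strictly dissipative one, and it is what will later justify extending the analysis to $CAT(\kappa)$ spaces via a local version of the same estimate.
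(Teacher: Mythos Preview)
Your proof is correct and follows essentially the same approach as the paper: decompose $N(\sigma^2(X_k)-\sigma^2(X_{k-1}))$ according to whether a pair $\{v',w'\}$ meets $\{V_k,W_k\}$ in zero, one, or two points; bound each cross term $2d^2(M_k,x_u)-d^2(x_{V_k},x_u)-d^2(x_{W_k},x_u)$ via the Bruhat--Tits (CN) inequality to obtain the pointwise estimate $\sigma^2(X_k)-\sigma^2(X_{k-1})\leq -\tfrac12 d^2(X_{V_k}(k-1),X_{W_k}(k-1))$; then take expectation over the random edge, using $\mathbb{P}[\{V_k,W_k\}=\{v,w\}]=\tfrac1N(\deg(v)^{-1}+\deg(w)^{-1})$, to recover $\Delta(X_{k-1})$. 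The paper's write-up is slightly terser in the bookkeeping but the argument is identical.
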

\begin{proof}
Taking into account that at round $k$, two nodes woke up with indices $V_{k}$ and $W_{k}$, it follows that:
\[
N(\sigma^2(X_{k}) - \sigma^2(X_{k-1})) = -d^2(X_{V_k}(k-1),X_{W_k}(k-1)) + \sum_{u\in V\atop u\not= V_k,u\not=W_k} T(V_k,W_k,u)
\]
where $T(V_k,W_k,u) = 2d^2(X_{u}(k),M_{k}) - d^2(X_{u}(k),X_{V_k}(k-1))-d^2(X_{u}(k),X_{W_k}(k-1))$ and $M_k$ denotes the midpoint $\langle\frac{X_{V_k}(k-1)+X_{W_k}(k-1)}2\rangle$. Notice that $X_{U_{k}}(k) = X_{V_{k}}(k) = M_k$.
Now, using the CAT(0) inequality, one has:
\[
N(\sigma^2(X_{k}) - \sigma^2(X_{k-1})) \leq \frac{N}2d^2(X_{V_k}(k-1),X_{W_k}(k-1))\;.
\]
Taking expectations on both sides and dividing by $N$ gives:
\[
\mathbb{E}[\sigma^2(X_{k}) - \sigma^2(X_{k-1})] \leq -\frac{1}{2}\mathbb{E}[d^2(X_{V_k}(k-1),X_{W_k}(k-1))]
\]
Recalling that $\mathbb{P}[\{V_k,W_k\}=\{u,v\}]=\frac{1}{\deg u}+\frac{1}{\deg v}$ when $u\sim v$ and $0$ otherwise, and that $(V_k,W_k)$ are independent from $X_{k-1}$, one can deduce:
\[
\mathbb{E}[d^2(X_{V_k}(k-1),X_{W_k}(k-1))] = \mathbb{E}[\Delta(X_{k-1})]
\]

\end{proof}

\begin{prop}
\label{prop:diamlapl}
Assume $G=(V,E)$ is an undirected connected graph, there exists a constant $C_G$ depending on the graph only such that:
\[
\forall x\in\mathcal{M}^N, \mskip 20mu \frac{1}{2N}\Delta(x)\leq \sigma^2(x) \leq C_G\Delta(x)
\]
\end{prop}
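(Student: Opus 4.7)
The proof splits naturally into the easy lower bound and the harder upper bound; only the constant $C_G$ requires the connectedness hypothesis.

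For the lower bound, I would simply observe that $\Delta(x)$ sums only over edges whereas $\sigma^2(x)$ sums over \emph{all} pairs. Since every vertex in a connected graph on $N\geq 2$ nodes has degree at least $1$, we have $\deg(v)^{-1}+\deg(w)^{-1}\leq 2$ for every edge $\{v,w\}$, so
\[
\Delta(x) \;\leq\; 2\sum_{v\sim w} d^2(x_v,x_w) \;\leq\; 2\sum_{\{v,w\}\in\mathcal{P}_2(V)} d^2(x_v,x_w) \;=\; 2N\sigma^2(x).
\]
This yields $\tfrac{1}{2N}\Delta(x)\leq \sigma^2(x)$ without using connectedness at all.

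For the upper bound, the plan is a Poincar\'e-type inequality on the graph. For each pair $\{v,w\}\in\mathcal{P}_2(V)$, fix once and for all a shortest path $P_{vw}$ in $G$ of length $\ell_{vw}\leq \diam(G)$, which exists by connectedness. The triangle inequality in $(\mathcal{M},d)$ together with the Cauchy--Schwarz inequality (in the form $(\sum_{i=1}^k a_i)^2\leq k\sum_{i=1}^k a_i^2$) gives
\[
d^2(x_v,x_w) \;\leq\; \ell_{vw}\sum_{\{a,b\}\in P_{vw}} d^2(x_a,x_b) \;\leq\; \diam(G)\sum_{\{a,b\}\in P_{vw}} d^2(x_a,x_b).
\]
Summing over $\{v,w\}\in\mathcal{P}_2(V)$ and swapping the order of summation, each edge $e=\{a,b\}$ contributes its squared length weighted by its \emph{path load} $n_e := \#\{\{v,w\}:e\in P_{vw}\}$. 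Setting $L_G := \max_{e\in E} n_e$ (a constant depending only on the graph and the choice of paths), one obtains $N\sigma^2(x)\leq \diam(G)\,L_G\sum_{\{a,b\}\in E}d^2(x_a,x_b)$.

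To close the loop, I would lower-bound the edge-sum by $\Delta(x)$ using the fact that $\deg(v)\leq N-1$ (or more sharply, $\deg(v)\leq \Delta_{\max}(G)$), which gives $\deg(v)^{-1}+\deg(w)^{-1}\geq 2/\Delta_{\max}(G)$, hence $\sum_{\{a,b\}\in E}d^2(x_a,x_b)\leq \tfrac{1}{2}\Delta_{\max}(G)\,\Delta(x)$. Combining the two inequalities yields
\[
\sigma^2(x) \;\leq\; \frac{\diam(G)\,L_G\,\Delta_{\max}(G)}{2N}\,\Delta(x),
\]
so one may take $C_G := \diam(G)\,L_G\,\Delta_{\max}(G)/(2N)$, which depends only on $G$.

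The main obstacle is really bookkeeping: the quantity $L_G$ is not intrinsic (it depends on the choice of shortest paths), and a purist might wish to replace it by the sharp constant, namely the reciprocal of the spectral gap of a suitable weighted graph Laplacian. For the purposes of this proposition any explicit graph-dependent constant suffices, and the shortest-path argument above is by far the most elementary route.
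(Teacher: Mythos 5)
Your proof is correct and follows essentially the same route as the paper: the lower bound via $\deg(v)^{-1}+\deg(w)^{-1}\leq 2$ and the inclusion of edges among all pairs, and the upper bound via a shortest-path/Cauchy--Schwarz (Poincar\'e-type) argument that converts each $d^2(x_v,x_w)$ into a sum of squared edge lengths and then reweights by degrees. Your bookkeeping with the path load $L_G$ is slightly sharper than the paper's cruder bound (which majorizes each path sum by the full edge sum and takes $C_G=(N-1)\deg(G)\diam(G)$), but the underlying idea is identical.
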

\begin{proof}
First:
\begin{eqnarray*}
\Delta(x) & = & \sum_{v\sim w} (\deg(v)^{-1} + \deg(w)^{-1})d^2(x_v,x_w) \\
& \leq & 2\sum_{v\sim w}d^2(x_v,x_w) \\
& \leq & 2\sum_{\{v,w\}\in\mathcal{P}_2(V)} d^2(x_v,x_w) = 2N\sigma^2(x)
\end{eqnarray*}

For the second inequality, consider $v\not=w$ two vertices in $V$, not necessarily adjacent. Since $G$ is connected, there exists a path $u_0 = v$, \dots, $u_l=w$ such that $u_i\sim u_{i+1}$. Then, using Cauchy-Schwartz inequality:
\[
d(v,w)^2\leq l\sum_{i=0}^{l-1} d^2(u_i,u_{i+1}) \leq 2\deg(G)\diam(G)\sum_{i=0}^{l-1}(\deg(u_i)^{-1} + \deg(u_{i+1})^{-1})d^2(u_i,u_{i+1})
\]
where $\deg(G)$ denotes the \emph{maximum degree} $\max\{\deg(v):v\in V\}$ and $\diam(G)$ the \emph{diameter} of $G$. Hence taking $C_G = (N-1)\deg(G)\diam(G)$, one recover the sought inequality.
\end{proof}
\begin{rem}
Both functions $\Delta$ and $\sigma^2$ measure disagreement in the network, $\Delta$ takes into account the graph connectivity while $\sigma^2$ does not. The previous result shows that $\Delta$ and $\sigma^2$ are nonetheless equivalent up to multiplicative constants.
\end{rem}

We now state a first convergence result.
\begin{thm}[Almost-sure convergence to consensus]
  \label{the:cons}
Under Assumptions~\ref{assum:common} and \ref{assum:cat0}, there exists a random variable $X_\infty=(X_{\infty,v})_{v\in V}$, such that: (i) almost surely, $\forall (v,w)\in V^2$, $X_{\infty,v} = X_{\infty,w}$, \emph{i.e.} $X_\infty$ takes consensus values, and (ii) $X_k$ converges almost surely to $X_\infty$.
\end{thm}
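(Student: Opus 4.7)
The plan is to locate pathwise monotone quantities controlling the evolution and to combine them with completeness of $\mathcal{M}$ to extract a limit. First, I would sharpen the proof of Proposition~\ref{prop:L}: the CAT(0) inequality is applied there pathwise, so before taking expectations one actually has $\sigma^2(X_k)-\sigma^2(X_{k-1})\leq -\tfrac12 d^2(X_{V_k}(k-1),X_{W_k}(k-1))\leq 0$ almost surely. Hence $(\sigma^2(X_k))_k$ is an almost surely nonincreasing nonnegative sequence and converges a.s. to some $\sigma_\infty^2\geq 0$.

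Next I would show that $\sigma_\infty^2=0$ almost surely. Telescoping Proposition~\ref{prop:L} gives $\sum_{k\geq 0}\mathbb{E}[\Delta(X_k)]\leq 2\sigma^2(X_0)<\infty$, so $\mathbb{E}[\Delta(X_k)]\to 0$. Proposition~\ref{prop:diamlapl} then yields $\mathbb{E}[\sigma^2(X_k)]\leq C_G\mathbb{E}[\Delta(X_k)]\to 0$, and since $\sigma^2(X_k)\leq\sigma^2(X_0)$ almost surely, dominated convergence applied to $\sigma^2(X_k)\downarrow\sigma_\infty^2$ gives $\mathbb{E}[\sigma_\infty^2]=0$, so $\sigma_\infty^2=0$ a.s. In particular $\diam(X_k)^2\leq N\sigma^2(X_k)\to 0$ almost surely, so all coordinates of $X_k$ get squeezed together.

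The remaining and somewhat delicate step is to show that each trajectory $(X_v(k))_k$ is actually Cauchy in $\mathcal{M}$; bounded sequences in a general complete CAT(0) space need not be relatively compact, so I cannot rely on subsequential extraction. I would introduce the ``radius'' functional $R_p(X):=\max_{v\in V} d(X_v,p)$ for a reference point $p\in\mathcal{M}$. The CAT(0) midpoint inequality $d^2(M_k,p)\leq\tfrac12 d^2(X_{V_k}(k-1),p)+\tfrac12 d^2(X_{W_k}(k-1),p)$ yields $d(M_k,p)\leq\max(d(X_{V_k}(k-1),p),d(X_{W_k}(k-1),p))$; since the other coordinates are left unchanged, $R_p(X_k)\leq R_p(X_{k-1})$ pathwise, for every fixed $p$.

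Fixing an agent $v$ and an instant $k_0$ and then specializing $p:=X_v(k_0)$ in the above monotonicity, for every $l\geq k_0$ one obtains
\[
d(X_v(l),X_v(k_0))\leq R_{X_v(k_0)}(X_l)\leq R_{X_v(k_0)}(X_{k_0})=\max_w d(X_w(k_0),X_v(k_0))\leq\diam(X_{k_0}).
\]
Since $\diam(X_k)\to 0$ almost surely, $(X_v(k))_k$ is Cauchy almost surely; completeness of $\mathcal{M}$ then produces the limit $X_{\infty,v}$, and $d(X_v(k),X_w(k))\to 0$ forces $X_{\infty,v}=X_{\infty,w}$ for all $v,w\in V$. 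The single step that requires care is the non-expansion of $R_p$: it hinges on the CAT(0) midpoint identity with $p$ in the role of the comparison vertex, which is the same purely metric mechanism already driving Proposition~\ref{prop:L}.
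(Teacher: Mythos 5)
Your proof is correct, and its first half (summability of $\mathbb{E}[\Delta(X_k)]$, hence $\sigma^2(X_k)\to 0$ almost surely via Proposition~\ref{prop:diamlapl}, hence vanishing diameter) follows the paper's argument; your additional observation that $\sigma^2(X_k)$ is in fact pathwise nonincreasing is a valid sharpening that the paper leaves implicit in the proof of Proposition~\ref{prop:L}. Where you diverge is the extraction of the limit: the paper forms the sets $S_k=\{X_v(k):v\in V\}$, notes that $S_{k+1}\subset\conv(S_k)$ and that $\diam(\conv(S_k))\leq 2\diam(S_k)$ because balls are convex, and then invokes Cantor's intersection theorem for nested closed sets of vanishing diameter in a complete space. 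You instead prove that each coordinate sequence $(X_v(k))_k$ is Cauchy, using the non-expansiveness of $R_p(X)=\max_v d(X_v,p)$ under the midpoint update, which follows from the inequality $d^2(M_k,p)\leq\tfrac12 d^2(X_{V_k}(k-1),p)+\tfrac12 d^2(X_{W_k}(k-1),p)\leq\max\bigl(d(X_{V_k}(k-1),p),d(X_{W_k}(k-1),p)\bigr)^2$. The two arguments rest on the same CAT(0) fact (midpoints of points in a ball stay in the ball, i.e.\ convexity of balls), but yours avoids introducing convex hulls and the Cantor intersection theorem altogether and identifies the limit of each agent's trajectory directly; the paper's version is slightly more geometric and makes the invariant region $\conv(S_k)$ explicit. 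Both are complete and rigorous; your specialization $p:=X_v(k_0)$ to a random reference point is legitimate since the monotonicity of $R_p$ holds pathwise for every fixed $p$.
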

\begin{proof}
Let us first show that $\Delta(X_k)$ converges almost surely to $0$.
From proposition~\ref{prop:L}, $\mathbb{E}[\sigma^2(X_k)]$ is nonincreasing; which implies again from proposition~\ref{prop:L}:
\[
\sum_k\mathbb{E}[\Delta(X_k)] < 2\sigma^2(X_0) < \infty
\]
Hence, $\sum_k \Delta(X_k)$ has a finite expectation and $\Delta(X_k)$ converges almost surely to $0$. Therefore, using the first inequality in proposition~\ref{prop:diamlapl}, $\sigma^2(X_k)$ converges to $0$. As a direct consequence, the diameter $\max\{d(X_v(k),X_w(k)):(v,w)\in V^2\}$ also tends to $0$ when $k$ goes to $\infty$.
Now denote by $S_k$ the set $\{X_v(k):v\in V\}$ and by $\conv(S_k)$ its \emph{convex hull}. One has $\diam(\conv(S_k))\leq 2\diam(S_k)$: every ball centered at $X_v(k)$ with radius $\diam(S_k)$ is a convex set containing $S_k$ and hence $\conv(S_k)$. Moreover, using the definition of convexity, one has $S_{k+1}\subset\conv(S_k)$. Therefore $\conv(S_{k})$ form a family of nested closed sets with diameter converging to $0$. It is an easy result that in a complete metric space, the intersection of a family of nested closed subsets with diameter converging to $0$ is reduced to a singleton.
\end{proof}
Actually the previous proof can be adapted to give information on the convergence speed of the algorithm. Let us first prove an elementary lemma.
\begin{lem}
\label{lem:gronwal}
Assume $a_n$ is a sequence of nonnegative numbers such that $a_{n+1} - a_n\leq -\beta a_n$ with $\beta>0$. Then,
\[
\forall n\geq 0, \mskip 20mu a_n \leq a_0\exp(-\beta n)
\]
\end{lem}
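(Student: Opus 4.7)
The plan is to turn the hypothesis into a multiplicative contraction, iterate it, and then pass from the geometric bound to the exponential bound via the elementary inequality $1-\beta \leq e^{-\beta}$.

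First I would rewrite the assumption $a_{n+1}-a_n \leq -\beta a_n$ as $a_{n+1} \leq (1-\beta) a_n$. Before iterating, I would dispose of the easy degenerate case $\beta \geq 1$: in that case $1-\beta \leq 0$ and $a_n \geq 0$ together force $a_{n+1} \leq 0$, hence $a_{n+1}=0$, and the bound $a_n \leq a_0 e^{-\beta n}$ holds trivially for all $n \geq 1$ (the case $n=0$ being vacuous).

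In the remaining case $\beta \in (0,1)$, the factor $1-\beta$ is nonnegative, so a straightforward induction on $n$ gives $a_n \leq (1-\beta)^n a_0$. The base case $n=0$ is trivial, and the induction step multiplies the hypothesis $a_n \leq (1-\beta)^n a_0$ by the nonnegative factor $1-\beta$ and chains with $a_{n+1} \leq (1-\beta) a_n$.

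Finally I would invoke the standard inequality $1-\beta \leq e^{-\beta}$ (valid for all real $\beta$, and immediate from the convexity of the exponential or from its Taylor expansion). Raising to the $n$-th power preserves the direction of the inequality since both sides are nonnegative, yielding $(1-\beta)^n \leq e^{-\beta n}$, and therefore $a_n \leq a_0 e^{-\beta n}$. There is no real obstacle here; the only mild subtlety is remembering to separate out the $\beta \geq 1$ case so that one does not implicitly raise a negative number to a power when iterating the contraction.
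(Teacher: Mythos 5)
Your proof is correct. It is close in spirit to the paper's argument---both ultimately rest on the elementary inequality $1-\beta\le e^{-\beta}$---but you work multiplicatively, iterating $a_{n+1}\le(1-\beta)a_n$ directly, whereas the paper sets $l_n=\log a_n$ and telescopes $l_{n+1}-l_n\le\log(1-\beta)\le-\beta$. Your route is actually the more careful one: the logarithmic argument silently assumes $a_n>0$ for all $n$ (otherwise $\log a_n$ is undefined, and indeed the sequence can hit zero and stay there) and assumes $\beta<1$ (otherwise $\log(1-\beta)$ is undefined). Your separate treatment of the case $\beta\ge1$, where nonnegativity forces $a_n=0$ for $n\ge1$, and your induction on the nonnegative factor $1-\beta$ in the case $\beta\in(0,1)$, dispose of both issues cleanly. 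What the paper's version buys is brevity and a transparent link to the statement of Theorem~\ref{the:speed}, which is phrased in terms of $\log\mathbb{E}\sigma^2(X_k)$; what yours buys is a proof with no implicit positivity hypotheses.
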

\begin{proof}
Indeed if $l_n = \log{a_n}$, then $l_{n+1} - l_n \leq \log(1-\beta)\leq -\beta$. Hence $l_n\leq l_0-\beta n$. Taking exponential on both side gives the expected result.
\end{proof}
We are now in a position to prove the following result:
\begin{thm}[Convergence speed]
	\label{the:speed}
Let $X_k=(x_{1}(k),...,x_{N}(k))$ denote the sequence of random variables generated by Algorithm~\ref{algo:gpapa}, under Assumptions~\ref{assum:common} and \ref{assum:cat0}, there exists $L<0$ such that,
\[
		\limsup_{k\to\infty}\frac{\log\mathbb{E}\sigma^2(X_k)}k \leq L
\]
\end{thm}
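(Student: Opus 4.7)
The plan is to chain the two preceding propositions and close the loop with Lemma~\ref{lem:gronwal}, applied to the deterministic sequence $a_k = \mathbb{E}[\sigma^2(X_k)]$.

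First, I would take expectations in Proposition~\ref{prop:L} to obtain
\[
\mathbb{E}[\sigma^2(X_k)] - \mathbb{E}[\sigma^2(X_{k-1})] \leq -\tfrac12 \mathbb{E}[\Delta(X_{k-1})].
\]
The key move is then to trade the Laplacian-like quantity $\Delta$ for the variance $\sigma^2$ using the right-hand inequality of Proposition~\ref{prop:diamlapl}, namely $\sigma^2(x) \leq C_G \Delta(x)$, which gives $\mathbb{E}[\Delta(X_{k-1})] \geq \mathbb{E}[\sigma^2(X_{k-1})]/C_G$. Substituting yields the one-step contraction
\[
a_k - a_{k-1} \leq -\beta\, a_{k-1}, \qquad \beta := \frac{1}{2 C_G} > 0,
\]
with $C_G$ the graph-dependent constant supplied by Proposition~\ref{prop:diamlapl}.

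Next, I would invoke Lemma~\ref{lem:gronwal} (noting $a_k \geq 0$ and that, for any reasonable graph, $\beta < 1$; in the degenerate case $\beta \geq 1$ the sequence collapses to zero after one step and the conclusion is trivial) to conclude
\[
\mathbb{E}[\sigma^2(X_k)] \leq \mathbb{E}[\sigma^2(X_0)] \exp(-\beta k).
\]
Taking logarithms, dividing by $k$, and letting $k \to \infty$, the $\log \mathbb{E}[\sigma^2(X_0)]/k$ term vanishes, giving
\[
\limsup_{k \to \infty} \frac{\log \mathbb{E}\sigma^2(X_k)}{k} \leq -\beta = -\frac{1}{2 C_G} =: L < 0,
\]
which is the desired statement.

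I do not anticipate a substantive obstacle: the argument is a direct concatenation of Proposition~\ref{prop:L} (variance drop controlled by $\Delta$), Proposition~\ref{prop:diamlapl} (equivalence of $\Delta$ and $\sigma^2$), and the elementary Lemma~\ref{lem:gronwal}. The only minor care needed is to confirm that $\beta \in (0,1)$ so that Lemma~\ref{lem:gronwal} applies literally, and to remember that the lemma is applied to the deterministic sequence of expectations, not pathwise.
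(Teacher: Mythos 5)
Your proposal is correct and follows essentially the same route as the paper: combine Proposition~\ref{prop:L} with the right-hand inequality of Proposition~\ref{prop:diamlapl} to get the one-step contraction $a_k - a_{k-1} \leq -\tfrac{1}{2C_G}a_{k-1}$, then apply Lemma~\ref{lem:gronwal}. Your write-up is in fact more explicit than the paper's (which leaves the constant implicit and contains a sign typo), and your side remark that $\beta = \tfrac{1}{2C_G} < 1$ is easily verified from $C_G = (N-1)\deg(G)\diam(G) \geq 1$.
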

\begin{proof}
Denote by $a_n = \mathbb{E}\sigma^2(X_k)$. Using the same argument as in the proof of theorem~\ref{the:cons} and proposition~\ref{prop:diamlapl}, we know that there exists a constant $L>0$ such that $a_{n+1} - a_n\leq La_n$. We conclude using lemma~\ref{lem:gronwal}.
\end{proof}
\begin{rem}
  Using Proposition~\ref{prop:diamlapl} it is straightforward to see that an analogous inequality holds for $\limsup_{k\to\infty}\log\mathbb{E}\Delta(X_k)/k$.
\end{rem}
\begin{rem}
What we have shown so far, is that for $CAT(0)$ spaces both convergence and convergence speed are similar to the Euclidean case; yet the proof techniques only rely on metric comparisons, whereas spectral techniques are mainly used in the Euclidean case (\emph{e.g.} \cite{boyd:gossip}).
\end{rem}

We now turn to the case of positively curved spaces.
\subsection{$CAT(\kappa)$ spaces with $\kappa > 0$}\label{subsec:positive}

In this section, we replace Assumption~\ref{assum:cat0} by the following:
\begin{assum}
\label{assum:catk}
\mbox{}
\begin{enumerate}
\item $\kappa>0$
\item
$(\mathcal{M},d)$ is a \emph{complete} $CAT(\kappa)$ metric space.
\item\label{assum:catk3}
$\diam(\{X_v(0):v\in V\}) < r_\kappa$
\end{enumerate}
\end{assum}
By proposition~\ref{prop:catk} we are ensured that Algorithm~\ref{algo:gpapa} is well-defined. Indeed, by convexity of balls with radius smaller than $r_\kappa$ points will remain within distance less than $r_\kappa$ of each other. Moreover midpoints are well-defined and unique since $r_\kappa<D_\kappa$.

The trick used to study $CAT(\kappa)$ configurations is to replace distance $d(x,y)$ by: $\chi_\kappa(d(x,y))$ with $\chi_\kappa = 1 - C_\kappa$ being pointwise nonnegative. We adapt the definitions used in the $CAT(0)$ setting as follows:
\begin{defn}
for $x\in\mathcal{M}^n$ define:
\[\Delta_\kappa(x)=\frac{1}2\sum_{v\sim w\atop\{v,w\}\in E}(\deg(v)^{-1}+\deg(w)^{-1})\chi_{\kappa}(d(x_v,x_w)) \]
\[\sigma_\kappa^2(x)=\frac2{N}\sum_{\{v,w\}\in\mathcal{P}_2(V)}\chi_{\kappa}(d(x_v,x_w))\]
\end{defn}
One can remark that for all $k \geq 0$, $(v,w)\in V^2$: $\sigma_\kappa^2(X_k) \geq 0$ and $\Delta_\kappa(X_k) \geq 0$. Notice that $\sigma_\kappa^2(x)=0$ implies that for all $\{v,w\}\in \mathcal{P}_2(V)$: $\chi_\kappa(d(v,w))=0$; and, since $0\leq d(v,w) \leq \frac{\pi}{2\sqrt{\kappa}}$, it implies that $d(v,w)=0$, hence the system is in a consensus state. Moreover, when $\kappa \to 0$, $\Delta_\kappa\to\Delta$ and $\sigma^2_\kappa\to\sigma^2$.

The following proposition is a direct consequence of lemma~\ref{lem:trigo}.
\begin{prop}
\label{prop:ineqk}
Under Assumption~\ref{assum:catk}, for any triangle $\Delta(pqr)$ in $\mathcal{C}$ where $m$ is the midpoint of $[p,q]$ we have:
\[\chi_\kappa(d(m,r)) \leq \frac{\chi_\kappa(d(p,r))+\chi_\kappa(d(q,r))}{2}\]
\end{prop}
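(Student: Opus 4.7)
The plan is to use the defining property of CAT$(\kappa)$ spaces to transfer the inequality to the model space of constant curvature $\kappa$ (the $2$-sphere of radius $1/\sqrt{\kappa}$), where lemma~\ref{lem:trigo} provides precisely the desired trigonometric estimate. Concretely, I would first build a comparison triangle $\bar p\bar q\bar r$ in the model space with the same side lengths as the triangle $pqr$ in $\mathcal{M}$. Thanks to Assumption~\ref{assum:catk}, the whole configuration stays inside a ball of radius strictly less than $r_\kappa$, so the comparison triangle exists, is unique up to isometry, and the midpoint $\bar m$ of the side $[\bar p,\bar q]$ in the model space is well defined and unique.

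Next I would invoke the CAT$(\kappa)$ comparison inequality itself: for the midpoint $m$ of $[p,q]$ and the corresponding midpoint $\bar m$ of $[\bar p,\bar q]$, one has
\[
d(m,r) \leq d(\bar m,\bar r).
\]
The function $\chi_\kappa = 1 - C_\kappa$ is nondecreasing on $[0,D_\kappa]$, and all of the distances appearing in this argument lie safely inside that interval because of the size hypothesis in Assumption~\ref{assum:catk}. Hence monotonicity transports the comparison inequality to
\[
\chi_\kappa(d(m,r)) \leq \chi_\kappa(d(\bar m,\bar r)).
\]

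Finally I would apply lemma~\ref{lem:trigo} to the comparison triangle $\bar p\bar q\bar r$, which yields exactly
\[
\chi_\kappa(d(\bar m,\bar r)) \leq \frac{\chi_\kappa(d(\bar p,\bar r)) + \chi_\kappa(d(\bar q,\bar r))}{2}.
\]
Chaining the two inequalities and using the defining equalities $d(\bar p,\bar r) = d(p,r)$ and $d(\bar q,\bar r) = d(q,r)$ of the comparison triangle gives the conclusion. The only delicate point is checking that all arguments of $\chi_\kappa$ remain in the range where it is monotone; this is guaranteed by Assumption~\ref{assum:catk}(3) together with Proposition~\ref{prop:catk}, and once lemma~\ref{lem:trigo} is in hand the rest is bookkeeping, which is why the authors describe the result as a direct consequence.
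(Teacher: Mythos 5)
Your route---build the comparison triangle, apply the CAT($\kappa$) midpoint comparison $d(m,r)\le d(\bar m,\bar r)$, then invoke Lemma~\ref{lem:trigo}---is in essence the paper's argument with the comparison step unpacked (Lemma~\ref{lem:trigo} is already stated for triangles in $\mathcal{M}$ and its proof already contains the passage to $\bar p\bar q\bar r$, so that part of your work is redundant rather than wrong). The genuine gap is in your final step: Lemma~\ref{lem:trigo} does \emph{not} ``yield exactly'' the averaged inequality $\chi_\kappa(d(\bar m,\bar r)) \le \tfrac12\bigl(\chi_\kappa(d(\bar p,\bar r))+\chi_\kappa(d(\bar q,\bar r))\bigr)$. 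What it yields is
\[
C_\kappa(d(\bar p,\bar r))+C_\kappa(d(\bar q,\bar r)) \;\le\; 2\,C_\kappa(d(\bar m,\bar r))\,C_\kappa\!\left(\frac{d(\bar p,\bar q)}{2}\right),
\]
and to pass from this to the $\chi_\kappa$ statement (equivalently, to $C_\kappa(d(\bar p,\bar r))+C_\kappa(d(\bar q,\bar r))\le 2C_\kappa(d(\bar m,\bar r))$) you must discard the factor $C_\kappa(d(\bar p,\bar q)/2)\le 1$ on the right. That is legitimate only if $C_\kappa(d(\bar m,\bar r))\ge 0$, i.e.\ only if $d(\bar m,\bar r)\le \frac{\pi}{2\sqrt\kappa}=r_\kappa$; a negative cosine would flip the inequality. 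This sign condition is exactly where Assumption~\ref{assum:catk}(3) does its real work, and establishing it is the entire content of the paper's proof of the proposition. Your proposal instead spends the hypothesis on the monotonicity of $\chi_\kappa$, which is a red herring: $\chi_\kappa$ is nondecreasing on all of $[0,D_\kappa]$, so the half-diameter bound buys nothing there.

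The repair is short: all three side lengths are $<r_\kappa$ by Assumption~\ref{assum:catk}(3), so $C_\kappa(d(\bar p,\bar r))+C_\kappa(d(\bar q,\bar r))>0$ and $C_\kappa(d(\bar p,\bar q)/2)>0$, whence the law-of-cosines identity in the model space forces $C_\kappa(d(\bar m,\bar r))>0$ (alternatively, convexity of small balls via Proposition~\ref{prop:catk} gives $d(m,r)<r_\kappa$ directly). Once that is recorded, dropping the factor and rewriting $C_\kappa=1-\chi_\kappa$ finishes the argument as in the paper.
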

\begin{proof}
From lemma~\ref{lem:trigo} we get:
\[
C_\kappa(d(p,r))+C_\kappa(d(q,r))-2C_\kappa(d(m,r)) \leq 2C_\kappa(d(m,r))C_\kappa(d(p,q))-2C_\kappa(d(m,r))
\]
Since $\max\{d(m,r),d(p,q)\}<\frac{\pi}{2\sqrt{\kappa}}$ we have:
$0 \leq C_\kappa(d(p,q)) \leq 1$ and $0 \leq C_\kappa(d(m,r)) \leq 1$
Which means that: $2C_\kappa(d(m,r))C_\kappa(d(p,q))-2C_\kappa(d(m,r))\leq 0$.
And thus:
\[
2\chi_\kappa(d(m,r)) \leq \chi_\kappa(d(p,r))+\chi_\kappa(d(q,r))
\]
\end{proof}

With this result it is now possible to prove using the same reasoning as in proposition~\ref{prop:L}. The techniques are the same but the details differ slightly. For the sake of completeness, we give the details below.
\begin{prop}
\label{prop:L2}
\[ \mathbb{E}[\sigma_\kappa^2(X_{k+1})-\sigma_\kappa^2(X_k)] \leq -\frac{1}{N}\mathbb{E}\Delta_\kappa(X_k) \]
\end{prop}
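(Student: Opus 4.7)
The plan is to transpose the proof of Proposition~\ref{prop:L} to this positively curved setting by systematically replacing squared distances $d^2(\cdot,\cdot)$ by $\chi_\kappa(d(\cdot,\cdot))$ and swapping the CAT(0) parallelogram-type inequality for Proposition~\ref{prop:ineqk}. Write $M_{k+1}$ for the midpoint $\langle\tfrac{X_{V_{k+1}}(k)+X_{W_{k+1}}(k)}{2}\rangle$.

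First I would expand $\tfrac{N}{2}(\sigma_\kappa^2(X_{k+1})-\sigma_\kappa^2(X_k))$ as a sum over unordered pairs $\{v,w\}\in\mathcal{P}_2(V)$, observing that only pairs touching $V_{k+1}$ or $W_{k+1}$ change between rounds. The pair $\{V_{k+1},W_{k+1}\}$ contributes $-\chi_\kappa(d(X_{V_{k+1}}(k),X_{W_{k+1}}(k)))$ since $\chi_\kappa(0)=0$. For each $u\notin\{V_{k+1},W_{k+1}\}$, the two affected pairs $\{u,V_{k+1}\}$ and $\{u,W_{k+1}\}$ jointly contribute
\[
T_\kappa(u) := 2\chi_\kappa(d(X_u(k),M_{k+1}))-\chi_\kappa(d(X_u(k),X_{V_{k+1}}(k)))-\chi_\kappa(d(X_u(k),X_{W_{k+1}}(k))),
\]
which is nonpositive by Proposition~\ref{prop:ineqk} applied to the triangle with vertices $X_{V_{k+1}}(k),X_{W_{k+1}}(k),X_u(k)$ and midpoint $M_{k+1}$. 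Dropping these nonpositive terms delivers the pathwise bound $\sigma_\kappa^2(X_{k+1})-\sigma_\kappa^2(X_k)\leq -\tfrac{2}{N}\chi_\kappa(d(X_{V_{k+1}}(k),X_{W_{k+1}}(k)))$.

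The single subtle point is to justify invoking Proposition~\ref{prop:ineqk}: its hypotheses require the triangle $X_{V_{k+1}}(k)X_{W_{k+1}}(k)X_u(k)$ to fit inside a ball of radius $<r_\kappa$. I would establish this by induction on $k$ from Assumption~\ref{assum:catk}(3): convexity of $r_\kappa$-balls (guaranteed by Proposition~\ref{prop:catk}) forces each new midpoint $M_{k+1}$ to lie inside every such ball that already contains $X_{V_{k+1}}(k)$ and $X_{W_{k+1}}(k)$, so the configuration diameter never exceeds its initial value. This containment step is the only place where the positive-curvature hypothesis truly matters, and it is the main obstacle; the remainder is algebraic bookkeeping. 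To conclude, I would condition on $X_k$ and use the independence of $(V_{k+1},W_{k+1})$ from $X_k$ together with $\mathbb{P}[\{V_{k+1},W_{k+1}\}=\{v,w\}]=\tfrac{1}{N}(\deg^{-1}(v)+\deg^{-1}(w))\delta\{v\sim w\}$ to identify $\mathbb{E}[\chi_\kappa(d(X_{V_{k+1}}(k),X_{W_{k+1}}(k)))\mid X_k]$ as a constant multiple of $\Delta_\kappa(X_k)$. Taking total expectations and absorbing the numerical factor yields the announced inequality.
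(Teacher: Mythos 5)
Your proposal follows essentially the same route as the paper's own proof: the same decomposition of $\tfrac{N}{2}(\sigma_\kappa^2(X_{k+1})-\sigma_\kappa^2(X_k))$ into the vanishing pair term and the nonpositive terms $T_\kappa(u)$ controlled by Proposition~\ref{prop:ineqk}, followed by the same conditional-expectation computation identifying the result with $\Delta_\kappa(X_k)$. Your explicit induction showing the configuration stays within an $r_\kappa$-ball (so that Proposition~\ref{prop:ineqk} applies) is a welcome addition, but it is the same argument the paper gives in the discussion immediately following Assumption~\ref{assum:catk}, so the proof is not materially different.
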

\begin{proof}
At round $k$, two nodes woke up with indices $V_{k}$ and $W_{k}$, it follows that:
\[
N(\sigma_\kappa^2(X_{k}) - \sigma_\kappa^2(X_{k-1})) = -\chi_{\kappa}(d(X_{V_k}(k-1),X_{W_k}(k-1))) + \sum_{u\in V\atop u\not= V_k,u\not=W_k} T_\kappa(V_k,W_k,u)
\]
Where $T_\kappa$ is defined as:
\[
T_\kappa(V_k,W_k,u) = 2\chi_\kappa(d(X_{u}(k),M_{k})) - \chi_\kappa(d(X_{u}(k),X_{V_k}(k-1)))-\chi_\kappa(d(X_{u}(k),X_{W_k}(k-1)))\;.
\]
Now, using the inequality of proposition~\ref{prop:ineqk}, one gets that $T_\kappa(V_k,W_k,u) \leq 0$ and:
\[
N(\sigma_\kappa^2(X_{k}) - \sigma_\kappa^2(X_{k-1})) \leq \chi_\kappa(d(X_{V_k}(k-1),X_{W_k}(k-1)))\;.
\]
Taking expectations on both sides and dividing by $N$ gives:
\[
\mathbb{E}[\sigma_\kappa^2(X_{k}) - \sigma_\kappa^2(X_{k-1})] \leq -\frac1N\mathbb{E}[\chi_\kappa(d(X_{V_k}(k-1),X_{W_k}(k-1)))]
\]
Using similar reasoning as in the proof of proposition~\ref{prop:L} we have:
\[
\mathbb{E}[\chi_\kappa(d(X_{V_k}(k-1),X_{W_k}(k-1)))] = \mathbb{E}[\Delta_\kappa(X_{k-1})]
\]
Which yields:
\[ \mathbb{E}[\sigma_\kappa^2(X_{k+1})-\sigma_\kappa^2(X_k)] \leq -\frac1N\mathbb{E}\Delta_\kappa(X_k)
\]
\end{proof}
\begin{rem}
  Notice the constant $1/N$ is in the right hand which differs from the case of nonpositive curvature (compare with Proposition~\ref{prop:L}).
\end{rem}
In order to derive a convergence result we need an analogous result to Proposition~\ref{prop:diamlapl} for $CAT(\kappa)$ spaces.
\begin{prop}
\label{prop:diamlaplk}
Assume $G=(V,E)$ is an undirected connected graph, there exists a constant $C_\kappa$ depending on the graph only such that:
\[
\forall x\in\mathcal{M}^N, \mskip 20mu \frac{\kappa}{N\pi^2}\Delta_\kappa(x)\leq \sigma_\kappa^2(x) \leq C_\kappa\Delta_\kappa(x)
\]
\end{prop}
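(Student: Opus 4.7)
The plan is to reduce the claim to Proposition~\ref{prop:diamlapl} by observing that on the range of distances relevant here, the nonlinear quantity $\chi_\kappa(d)$ is comparable to $d^{2}$ up to multiplicative constants. Once this scalar equivalence is in hand, each inequality gets transported term by term from the sums defining $\Delta$ and $\sigma^{2}$ to the sums defining $\Delta_\kappa$ and $\sigma_\kappa^{2}$.

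The first step is to establish the two-sided trigonometric bound
\[
\frac{2\kappa}{\pi^{2}}\,t^{2}\ \leq\ \chi_\kappa(t)\ \leq\ \frac{\kappa}{2}\,t^{2}\qquad\text{for every }t\in[0,\pi/\sqrt{\kappa}].
\]
The upper inequality follows at once from $1-\cos s\leq s^{2}/2$, and the lower one from rewriting $\chi_\kappa(t)=2\sin^{2}(\sqrt{\kappa}\,t/2)$ and invoking Jordan's inequality $\sin u\geq 2u/\pi$ on $[0,\pi/2]$. Under Assumption~\ref{assum:catk} every pairwise distance $d(x_v,x_w)$ stays below $r_\kappa<\pi/\sqrt{\kappa}$, so this bound applies to every summand appearing in $\Delta_\kappa(x)$ or $\sigma_\kappa^{2}(x)$.

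The second step is to use this pointwise comparison to relate $\Delta_\kappa,\sigma_\kappa^{2}$ and $\Delta,\sigma^{2}$ (with $\kappa$ appearing as the multiplicative factor in each direction), and then to close the loop through Proposition~\ref{prop:diamlapl}. For the right-hand inequality, I would chain $\sigma_\kappa^{2}(x)\leq \kappa\,\sigma^{2}(x)\leq \kappa C_G\,\Delta(x)\lesssim C_G\,\Delta_\kappa(x)$, where the two $\kappa$ factors cancel thanks to the lower bound on $\chi_\kappa$. Symmetrically, for the left-hand inequality, $\Delta_\kappa(x)\lesssim \kappa\,\Delta(x)\leq 2N\kappa\,\sigma^{2}(x)\lesssim N\,\sigma_\kappa^{2}(x)$, where the only graph-dependence is the factor $N$ supplied by Proposition~\ref{prop:diamlapl}.

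The main obstacle is ensuring that the trigonometric sandwich is used only where it is valid: the lower bound on $\chi_\kappa(t)$ deteriorates as $\sqrt{\kappa}\,t$ approaches $\pi$ and fails beyond it, so the diameter hypothesis of Assumption~\ref{assum:catk} is genuinely needed and cannot be weakened within this approach. Apart from this point, the argument is a purely mechanical transfer of the path and Cauchy--Schwarz bookkeeping of Proposition~\ref{prop:diamlapl}; no further geometric input is required.
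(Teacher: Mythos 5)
Your proof takes exactly the same route as the paper's: the two-sided bound $\frac{2\kappa}{\pi^2}t^2\leq\chi_\kappa(t)\leq\frac{\kappa}{2}t^2$ on the relevant range (which the paper asserts without the $\chi_\kappa(t)=2\sin^2(\sqrt{\kappa}t/2)$ and Jordan-inequality justification you supply), followed by a term-by-term transfer through Proposition~\ref{prop:diamlapl}. The only difference is that your chaining yields a left-hand constant of the form $c/N$ with $c$ absolute, the two factors of $\kappa$ cancelling; this is arguably more natural than the $\kappa/(N\pi^2)$ in the statement, since $\Delta_\kappa$ and $\sigma_\kappa^2$ are both sums of the dimensionless quantity $\chi_\kappa$ and their ratio should not carry a factor of $\kappa$.
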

\begin{proof}
One has: $\frac{2\kappa}{\pi^2}x^2 \leq \chi_\kappa(x) \leq \frac{\kappa}{2}x^2$ when $0\leq x<\frac{\pi}{2\sqrt{\kappa}}$. Hence, under Assumption~\ref{assum:catk}, $\chi_\kappa$ and $d$ are equivalent. The result then follows from Proposition~\ref{prop:diamlaplk}.
\end{proof}
All the tools to show almost-sure convergence and speed are in place. The proofs of the following two results are exactly the same than in the $CAT(0)$ case, provided $\Delta$ and $\sigma$ are replaced by $\Delta_\kappa$ and $\sigma_\kappa$.
\begin{thm}
\label{the:consk}
Let $X_k=(X_{1}(k),...,X_{N}(k))$ denote the sequence generated by Algorithm~\ref{algo:gpapa}, then under Assumptions~\ref{assum:common} and \ref{assum:catk}, there exists a random variable $X_\infty$ taking values in the consensus subspace, such that $X_k$ tends to $X_\infty$ almost surely.
\end{thm}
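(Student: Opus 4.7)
The plan is to mirror the proof of Theorem~\ref{the:cons}, replacing $\sigma^2$ and $\Delta$ by $\sigma_\kappa^2$ and $\Delta_\kappa$, with the extra care that every iterate must remain inside a region where the $CAT(\kappa)$ tools keep applying. First I would verify that Algorithm~\ref{algo:gpapa} stays well-defined along the entire trajectory. By Assumption~\ref{assum:catk}, item~\ref{assum:catk3}, the initial states lie in some closed ball of radius $<r_\kappa$; since such balls are convex in a $CAT(\kappa)$ space (Proposition~\ref{prop:catk}), an easy induction shows that every midpoint update keeps all $X_v(k)$ in that ball. In particular all pairwise distances stay below $r_\kappa$, so midpoints are unique and $\chi_\kappa$ is well-behaved on every pair $(X_v(k),X_w(k))$.

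Next I would apply Proposition~\ref{prop:L2} exactly as Proposition~\ref{prop:L} was applied in the $CAT(0)$ proof. Nonnegativity of $\sigma_\kappa^2$ and telescoping yield
\[
\sum_{k\geq 0}\mathbb{E}[\Delta_\kappa(X_k)] \leq N\sigma_\kappa^2(X_0) < \infty,
\]
which forces $\Delta_\kappa(X_k)\to 0$ almost surely. Proposition~\ref{prop:diamlaplk} then gives $\sigma_\kappa^2(X_k)\to 0$ a.s., and since $\chi_\kappa$ is equivalent to $d^2$ on $[0,r_\kappa)$ via $\tfrac{2\kappa}{\pi^2}t^2 \leq \chi_\kappa(t) \leq \tfrac{\kappa}{2}t^2$, this transfers to $\diam(S_k)\to 0$ almost surely, where $S_k=\{X_v(k):v\in V\}$.

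The final step is the Cantor-style argument producing $X_\infty$. Because balls of radius $<r_\kappa$ are convex, the closed ball of radius $\diam(S_k)$ around any $X_v(k)$ is a convex set containing $S_k$, giving $\diam(\conv(S_k))\leq 2\diam(S_k)$; and the midpoint update ensures $S_{k+1}\subset\conv(S_k)$. Hence $\overline{\conv(S_k)}$ forms a nested family of closed sets with diameters tending to $0$, and completeness of $\mathcal{M}$ yields a unique $X_\infty$ in their intersection, to which each coordinate converges almost surely.

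The principal obstacle absent from the $CAT(0)$ setting is exactly the containment of the orbit inside a ball of radius $<r_\kappa$: without it, midpoints might fail to be unique, $\chi_\kappa$ could go negative, and balls would stop being convex, breaking simultaneously the well-posedness of the algorithm, the use of Proposition~\ref{prop:ineqk}, and the nested-convex-hull argument. All three concerns are dispatched at once by the convexity of small balls in $CAT(\kappa)$, which is precisely what Assumption~\ref{assum:catk} is designed to guarantee; beyond this point the $CAT(0)$ proof essentially transcribes unchanged.
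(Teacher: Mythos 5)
Your proposal follows exactly the route the paper takes: the paper's own ``proof'' of Theorem~\ref{the:consk} is the single remark that the argument is identical to the $CAT(0)$ case with $\sigma^2,\Delta$ replaced by $\sigma_\kappa^2,\Delta_\kappa$, resting on Propositions~\ref{prop:L2} and~\ref{prop:diamlaplk} and on the observation after Assumption~\ref{assum:catk} that convexity of small balls keeps the algorithm well-defined. Your telescoping bound, the chain $\Delta_\kappa(X_k)\to 0\Rightarrow\sigma_\kappa^2(X_k)\to 0\Rightarrow\diam(S_k)\to 0$, and the nested-convex-hull plus completeness argument all match Theorem~\ref{the:cons}.

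One step deserves tightening. From ``all iterates remain in a fixed closed ball of radius $\rho<r_\kappa$'' you conclude ``all pairwise distances stay below $r_\kappa$''; this is off by a factor of two, since two points of such a ball can be as far as $2\rho$ apart, which only guarantees distances $<D_\kappa$, not $<r_\kappa$. The distinction matters: Proposition~\ref{prop:ineqk} (hence Proposition~\ref{prop:L2}) and the two-sided bound $\frac{2\kappa}{\pi^2}t^2\le\chi_\kappa(t)\le\frac{\kappa}{2}t^2$ genuinely require distances strictly below $r_\kappa=\frac{\pi}{2\sqrt\kappa}$, so you need the stronger invariant $\diam(S_k)<r_\kappa$ for every $k$, not mere containment in a small ball. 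The correct induction is on the diameter itself: if $\diam(S_k)<r_\kappa$, then for every agent $u$ the ball $B(X_u(k),\diam(S_k))$ is convex by Proposition~\ref{prop:catk} and contains both $X_{V_{k+1}}(k)$ and $X_{W_{k+1}}(k)$, hence their midpoint, so $\diam(S_{k+1})\le\diam(S_k)<r_\kappa$. (The paper makes the same elision in the sentence following Assumption~\ref{assum:catk}, so this repairs the source as much as your write-up.) With that adjustment the rest of your argument goes through verbatim.
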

\begin{thm}
	\label{the:speedk}
Let $X_k=(x_{1}(k),...,x_{N}(k))$ denote the sequence of random variables generated by Algorithm~\ref{algo:gpapa}; under Assumptions~\ref{assum:common} and \ref{assum:catk}, there exists $L<0$ such that,
\[
\limsup_{k\to\infty}\frac{\log\mathbb{E}\Delta_\kappa(X_k)}k \leq L
\]
\end{thm}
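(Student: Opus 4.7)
The plan is to mimic the $CAT(0)$ proof of Theorem~\ref{the:speed} line by line, using the $CAT(\kappa)$-analogues (Propositions~\ref{prop:L2} and~\ref{prop:diamlaplk}) in place of Propositions~\ref{prop:L} and~\ref{prop:diamlapl}, and then Lemma~\ref{lem:gronwal} to turn a one-step contraction into an exponential decay. The only place where $CAT(\kappa)$ behaves differently from $CAT(0)$ is that all of the comparison estimates require the iterates to stay inside a ball of radius $r_\kappa$; this is guaranteed by part~\ref{assum:catk3} of Assumption~\ref{assum:catk} together with convexity of small balls (already noted right after Assumption~\ref{assum:catk}), so every call to Proposition~\ref{prop:L2} is legitimate along the whole trajectory.

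Concretely, set $a_k := \mathbb{E}\sigma_\kappa^2(X_k)$, which is finite and nonnegative. Proposition~\ref{prop:L2} gives
\[
a_{k+1}-a_k \;\leq\; -\frac{1}{N}\,\mathbb{E}\Delta_\kappa(X_k),
\]
and Proposition~\ref{prop:diamlaplk} gives $\sigma_\kappa^2(x)\leq C_\kappa \Delta_\kappa(x)$, hence $\mathbb{E}\Delta_\kappa(X_k)\geq a_k/C_\kappa$. Combining these two inequalities yields
\[
a_{k+1}-a_k \;\leq\; -\beta\, a_k, \qquad \beta:=\frac{1}{N\,C_\kappa}>0.
\]
Applying Lemma~\ref{lem:gronwal} then gives $a_k\leq a_0\,e^{-\beta k}$.

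To convert the decay of $\sigma_\kappa^2$ into a decay of $\Delta_\kappa$, I would use the \emph{other} half of Proposition~\ref{prop:diamlaplk}, namely $\Delta_\kappa(x)\leq \tfrac{N\pi^2}{\kappa}\sigma_\kappa^2(x)$, to deduce
\[
\mathbb{E}\Delta_\kappa(X_k)\;\leq\;\frac{N\pi^2}{\kappa}\,a_0\,e^{-\beta k}.
\]
Taking logarithms, dividing by $k$, and passing to $\limsup$ kills the constant prefactor and yields the claimed bound with $L=-\beta<0$.

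The only real obstacle, and the reason the statement is not completely trivial from the $CAT(0)$ case, is the bookkeeping around the $CAT(\kappa)$ hypothesis: one must verify that $\sigma_\kappa^2$ is genuinely a nonnegative quantity controllable by $\Delta_\kappa$ along the whole orbit, which in turn requires the invariance of the ball of radius $r_\kappa$ (so that $\chi_\kappa$ stays nonnegative and Proposition~\ref{prop:diamlaplk}'s two-sided equivalence between $\chi_\kappa(d)$ and $d^2$ applies). Once that invariance is invoked, the argument is a formal copy of the $CAT(0)$ proof with the factor $\tfrac{1}{2}$ replaced by $\tfrac{1}{N}$ on the one side and $\tfrac{1}{2N}$ replaced by $\tfrac{\kappa}{N\pi^2}$ on the other, so the exact value of $L$ can be tracked if desired but is not needed for the statement.
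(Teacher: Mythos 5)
Your proposal is correct and follows exactly the route the paper takes: the paper's proof of Theorem~\ref{the:speedk} is literally ``the same as the $CAT(0)$ case with $\Delta,\sigma^2$ replaced by $\Delta_\kappa,\sigma_\kappa^2$,'' i.e.\ combine Proposition~\ref{prop:L2} with the upper bound $\sigma_\kappa^2\leq C_\kappa\Delta_\kappa$ of Proposition~\ref{prop:diamlaplk} to get a one-step contraction, apply Lemma~\ref{lem:gronwal}, and transfer the decay from $\sigma_\kappa^2$ to $\Delta_\kappa$ via the other half of the equivalence. Your added bookkeeping about the trajectory remaining in a ball of radius $r_\kappa$ (so that $\chi_\kappa\geq 0$ and the equivalence of $\chi_\kappa(d)$ with $d^2$ applies) is exactly the point the paper makes right after Assumption~\ref{assum:catk}, so nothing is missing.
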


These results show that -- \emph{provided all the initial points are close enough from each other}, this is detailed by Assumption~\ref{assum:catk}.\ref{assum:catk3} -- the situation is the same as in nonpositive curvature, namely, almost sure convergence taking place at least exponentially fast. Notice that, by contrast, there are no constraints on the initialization, for the result to hold true in $CAT(0)$. Notice also that the radius involved in Assumption~\ref{assum:catk}.\ref{assum:catk3} depends on the curvature upper bound $\kappa$ and ensures convexity of corresponding balls. It gives a hint that convexity plays an important role in the behavior of the algorithm, which is not surprising, since the algorithm basically amounts to take random midpoints.

\section{Numerical Simulations}
\label{sec:simus}

In this section we simulate Algorithm~\ref{algo:gpapa} through four examples. The first example is the space of covariance matrices; it is a Hadamard manifold (\emph{i.e.}, a complete, simply connected manifold with nonpositive sectional curvature, see, \emph{e.g.} \cite[Chap XI.3]{lang}). The second is the metric graph, (a complex of $(0,1]$ segments), which is a $CAT(0)$ metric space with no differential structure. The other two examples are of $CAT(\kappa)$ spaces with $\kappa>0$. They are the three dimensional unit sphere $S^2$ and three dimensional rotation matrices $SO(3)$.

When one of the above mentioned spaces happens to be stable by addition and multiplication by a scalar (it is the case for positive definite matrices), we compare the performance of \midgossip with that of the \arithgossip. In order to clarify between the two algorithms when they can both be used; we use the term \midgossip for Algorithm~\ref{algo:gpapa} and the term \arithgossip for the classical random pairwise algorithm $X_{n+1,v} = X_{n+1,w} = \frac12(X_{n,v}+X_{n,w})$ \cite{boyd:gossip} which is equivalent to \midgossip when the distance is the Euclidean one.

The results of these comparisons, as we shall see, might depend on the distance function used to define the disagreement function, or equivalently, the variance function.

\subsection{Positive definite matrices}

The scenario in this experiment is the following. Each sensor in a network estimates a covariance matrix for some observed multivariate process. Then the network seeks a consensus on these covariance matrices. We implemented the proposed algorithm using known facts from the geometry of positive definite matrices $\Pos(n)$, \cite[chap. 12]{lang}. $\Pos(n)$ is equipped with distance
\[
d(M,N)^2 = \tr\{\log(N^{-1/2}MN^{-1/2})\log(N^{-1/2}MN^{-1/2})^T\} = \|\log(MN^{-1})\|^2\,,
\]
and
\[
\langle\frac{M+N}2\rangle = M^{1/2}(M^{-1/2}NM^{-1/2})^{1/2}M^{1/2}\;.
\]
Using this distance, which comes from a Riemannian metric, $\Pos(n)$ is a Hadamard manifold~\cite[p.326]{lang}, see also \cite{barbaresco:2013} for an in-depth presentation, and as such, it is a $CAT(0)$ space\cite[prop 3.4, p.311]{lang}. Using the previous relations, it is straightforward to implement the \midgossip algorithm and compute $\log\sigma^2\big(M(n)\big)$ at each iteration $n$; where $M(n)=(M_1(n),\dots, M_N(n))$ denotes the tuple of positive definite matrix held by the agents $1\leq v\leq N$ at time $n$. Regarding the initialization step, we generate $N$ iid matrices $M_v(0)$, following a Wishart distribution on $q\times q$ positive definite matrices with parameters $(q,1)$, \emph{i.e.}, as $\sum_{k=1}^q X_{k,v}X_{k,v}^T$ where $X_{k,v}\sim\mathcal{N}(0,I_q)$ are independent standard multivariate Gaussian vectors of dimension $q$ (in this numerical experiment $q=3$ and $N=30$). Regarding the network, we experiment with both the complete graph $K_N$ and the path graph $P_N$ ($V=\{1\dots N\}$, $\{i,i+1\} \in E$ for $i\in \{1,\dots N-1\}$). The complete graph mixes information fast, while the path graph does not. It is interesting to compare the results obtained in both cases. are displayed in Figure~\ref{fig:covariance} (for complete graph) and~\ref{fig:CovRiemPn} (for path graph). Note that the algorithm is very close to the one proposed in \cite{bonnabel2011stochastic} which consists in the iterations $M^{1/2}(M^{-1/2}NM^{-1/2})^{\gamma_n}M^{1/2}$ where $\gamma_n$ is a sequence of stepsize such that $\sum_n \gamma_n = +\infty$ and $\sum_n\gamma_n^2<\infty$. In particular, stepsize $\gamma_n$ should go to $0$ while in our case it is kept constant at $1/2$. The full and dashed curves in figure~\ref{fig:CompCovGrad} represent the function $\log(\sigma^2_n)$ for respectively the stochastic gradient descent method (implemented with a decreasing step size $\gamma_n=\frac{1}{n}$) and midpoint gossip algorithm; the initialization and graph used for both algorithms being the same (complete graph), the two curves can be compared so as to deduce that while the consensus midpoint algorithm leads to exponential convergence, the $\log(\sigma^2_n)$ curve for the gradient descent method seems to converge slower. Actually the fact that it converges slower is coherent with stochastic approximation with decreasing stepsize. Indeed, it is known that, in the Euclidean setting \cite[chap. 10]{kushner:yin:1997}, for stepsize $\gamma_n$, the speed of convergence is of order ${\gamma_n}^{-1/2}$.


It is also interesting in this case to make a comparison for positive definite matrices between the Midpoint gossip algorithm and the Euclidean arithmetic gossip. In figure~\ref{fig:CompCovRiem} we plot $n\mapsto \sigma^2_n$ where $n$ is the number of iterations and $\sigma^2$ is the sum of "non Euclidean" distances squared. The result suggests that the Arithmetic gossip algorithm has a slight advantage over midpoint gossip in terms of convergence speed. However, if we plot $n\mapsto \sigma^2_{n,E}$ where $\sigma^2_{n,E}= \frac{1}{N}\sum_{\{i,j\}\in \mathcal{P}_2(V)}||x_i(n)-x_j(n)||_F^2$ and $||.||_F$ is the Frobenius Euclidean norm, the opposite seems to be true, as shown in figure~\ref{fig:CompCovEuc} midpoint algorithm performs slightly better. The midpoint gossip algorithm converges faster than arithmetic gossip when the variance is expressed in Euclidean distances.

\begin{figure}[ht!]
  \centering
  \includegraphics[width=0.8\linewidth]{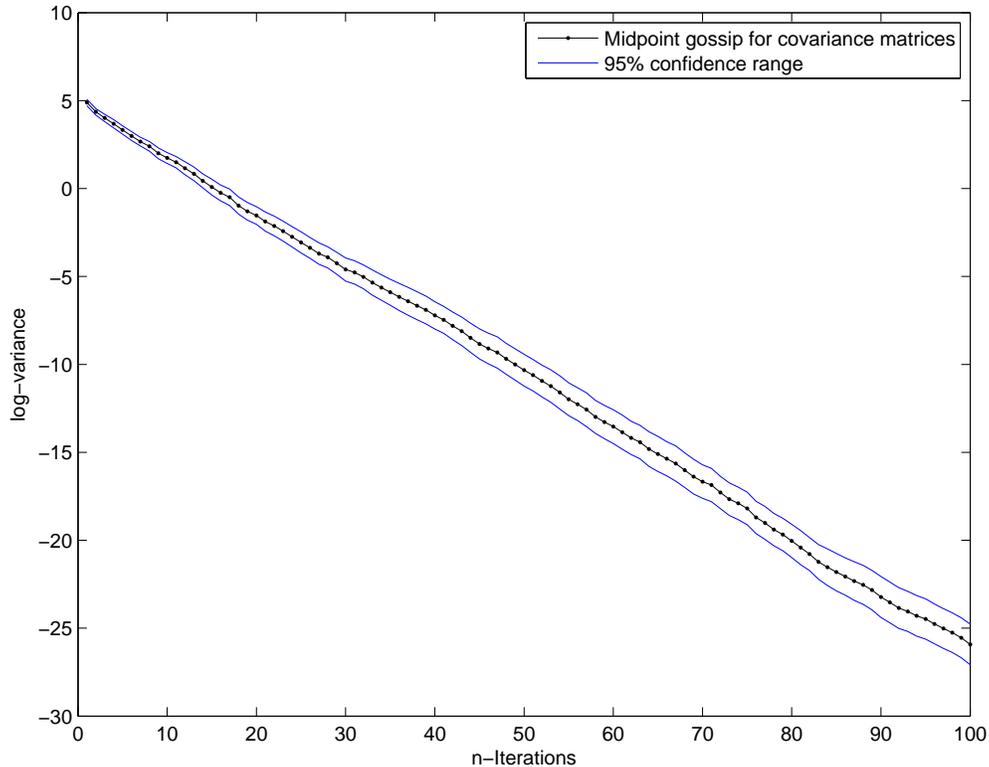}
  \caption{Plot of $n\mapsto \log \sigma^2_n$ for the positive definite matrices space; the underlying network is the complete graph $K_n$. Because of the stochastic nature of the algorithm, $50$ simulations are done and we plot $\log\sigma^2_n$ as well as a confidence domain which contains $95\%$ of the simulated curves. The variance function behaves like an exponential, in accordance with the prediction of theorem~\ref{the:speed}.}
  \label{fig:covariance}
  \end{figure}

\begin{figure}[ht!]
  \centering
  \includegraphics[width=0.8\linewidth]{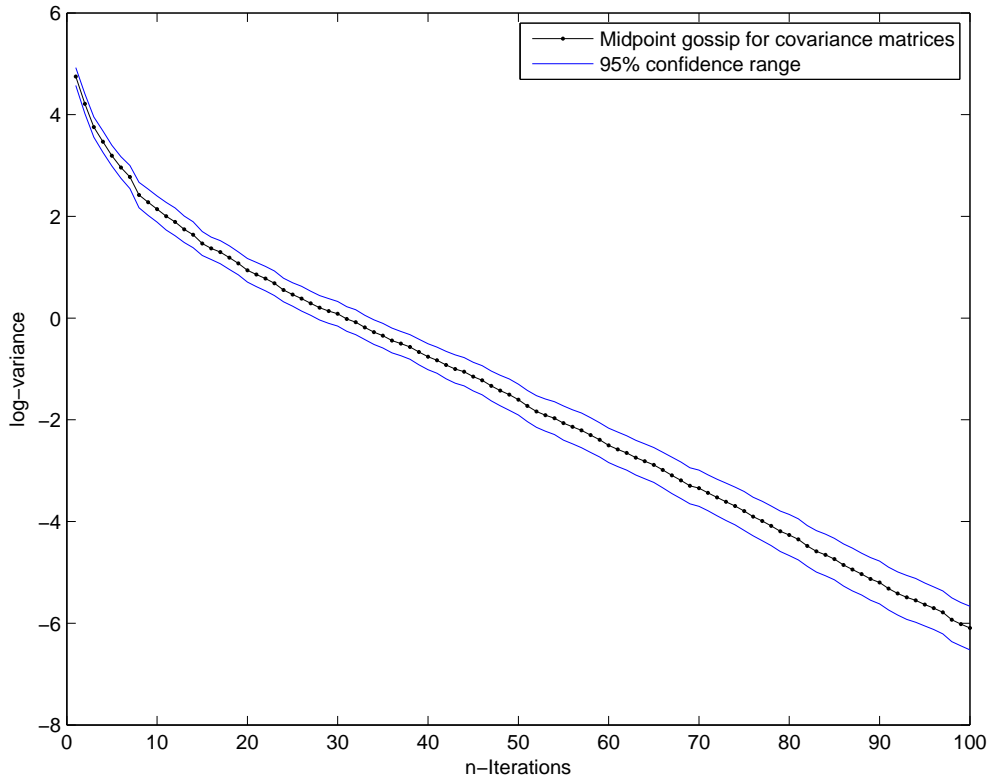}
  \caption{Plot of $n\mapsto \log \sigma^2_n$ for the positive definite matrices manifold using the path graph $P_n$, Because of the stochastic nature of the algorithm, $50$ simulations are done and we plot $\log\sigma^2_n$ as well as a confidence domain which contains $95\%$ of the simulated curves. We see that the variance asymptotically behaves like an exponential, in accordance with the prediction of theorem~\ref{the:speed}. The convergence tough is much slower than that of the complete graph, with a smaller slope. The connectivity of the graph plays an important role in determining the speed of convergence.}
  \label{fig:CovRiemPn}
\end{figure}

\begin{figure}[ht!]
  \centering
  \includegraphics[width=0.8\linewidth]{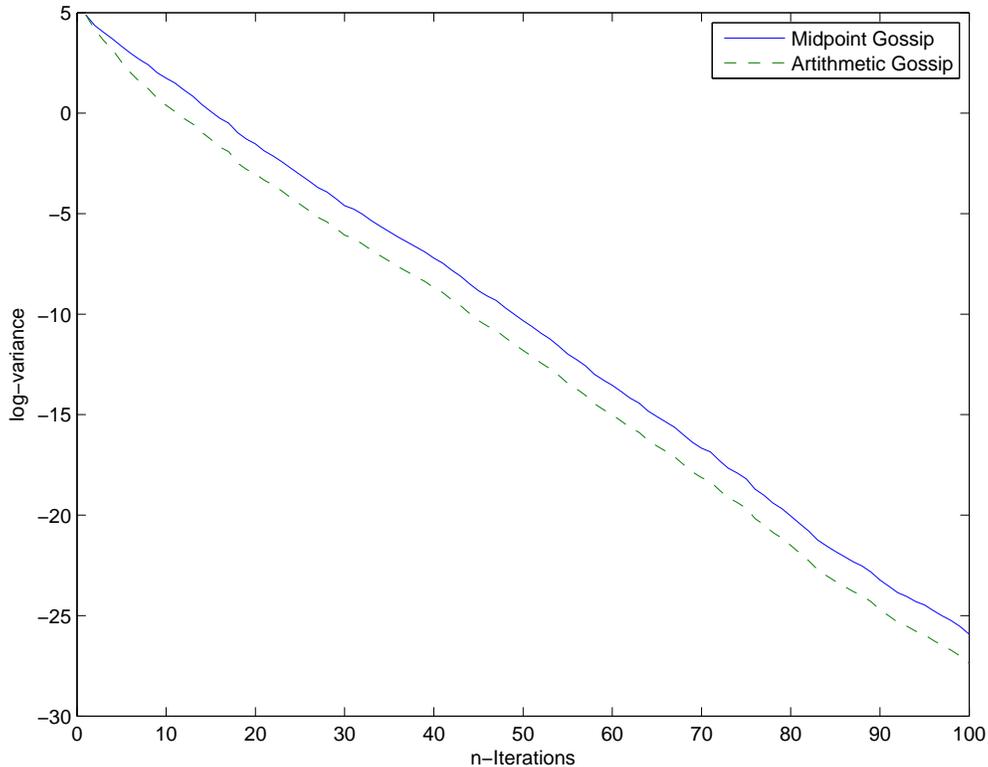}
  \caption{Plot of $n\mapsto \log \sigma^2_n$ (non euclidean distances) for the positive definite matrices manifold, the full curve represents the midpoint gossip algorithm, while the dashed curve represents the classical gossip based on arithmetic averaging. The arithmetic gossip seems to converge faster.}
  \label{fig:CompCovRiem}
\end{figure}

\begin{figure}[ht!]
  \centering
  \includegraphics[width=0.8\linewidth]{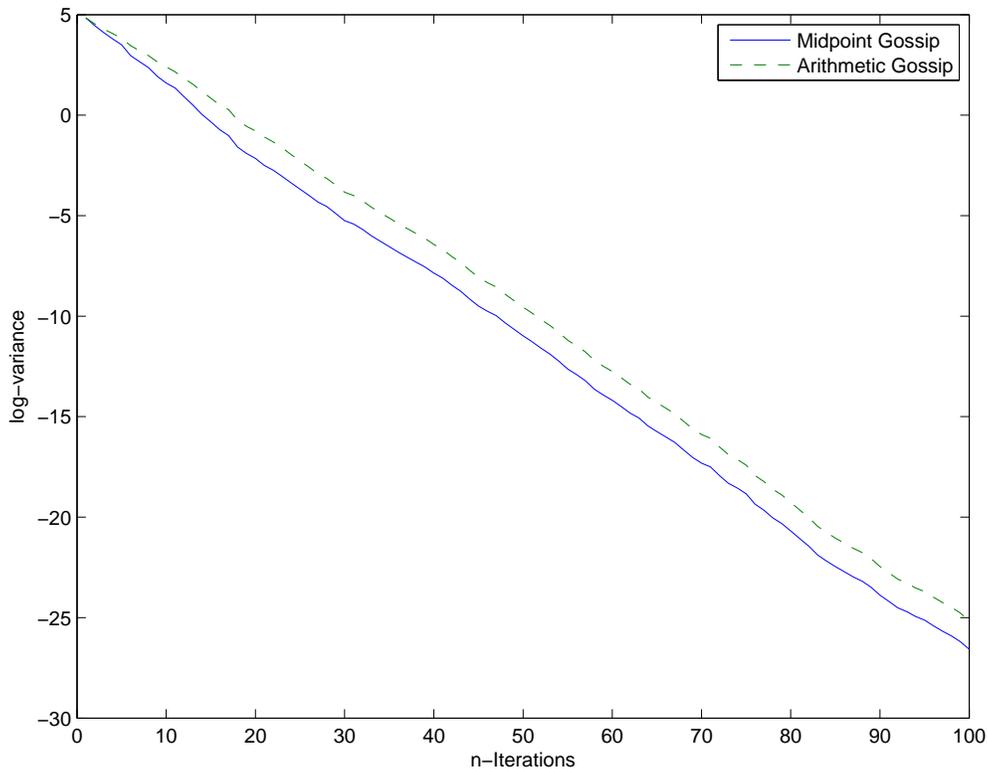}
  \caption{Plot of $n\mapsto \log \sigma^2_{n,E}$ (using euclidean norm) for the positive definite matrices manifold, the full curve represents the Pairwise Midpoint Algorithm, while the dashed curve represents the classical Euclidean gossip. The midpoint gossip seems to have faster convergence.}
  \label{fig:CompCovEuc}
\end{figure}

\begin{figure}[ht!]
  \centering
  \includegraphics[width=0.8\linewidth]{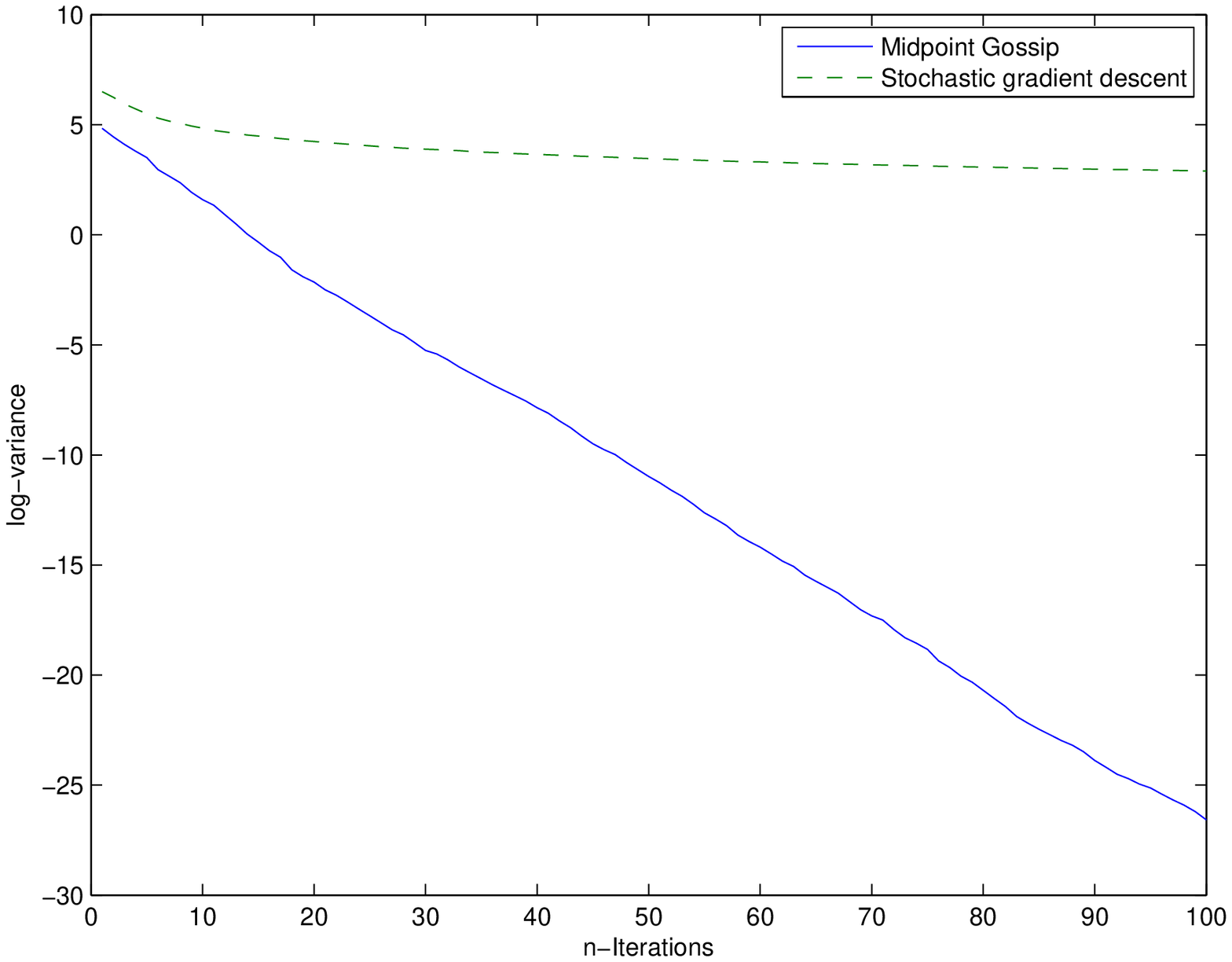}
  \caption{Plot of $n\mapsto \log\sigma^2_n$ for the positive definite matrices manifold. The full curve represents the Riemannian midpoint gossip while the dashed curve represents the stochastic gradient descent method applied to the function $\sigma^2$. Convergence is exponential in the first case while it is not for the second.}
  \label{fig:CompCovGrad}
\end{figure}

\subsection{The metric graph associated with free Group $F_2$}
Consider a network $G=(V,E)$ of robotic arms capable of performing two types of rotations $R_1$ and $R_2$, of distinct rotation axes $\Delta_1$ and $\Delta_2$. After being assigned an axis of rotation, an arm rotates continuously around that axis until it reaches its target rotation angle or gets interrupted. At an initial time, all arms are in an identical position. After that they evolve separately. To recollect them in a common position, they unwind all their movements, provided they kept the whole history. Here, we argue that a less costly procedure could be applied. We apply Algorithm~\ref{algo:gpapa} on a convenient state space that we describe below, to drive the arms near a consensus position, in a completely distributed and autonomous fashion.


We assume furthermore that $R_1$ and $R_2$ are chosen in ``generic position''. By that, we mean that they are realizations of independent and uniform random variables on $SO_3$ (\emph{i.e.} according to the Haar measure on $SO_3\times SO_3$). As such, it is known, \cite{epstein1971almost}, that they are almost surely algebraically independent, \emph{i.e.}, if $\exists n> 0$, $(k_1,\dots ,k_n) \in \{-1,1\}$ and $i_1\dots i_n\in \{1,2\}$ such that: $R_{i_1}^{k_1}R_{i_2}^{k_2}\dots R_{i_n}^{k_n}=I$, then there exists two consecutive indices $i_j$, $i_{j+1}$ such that $i_j=i_{j+1}$.

Define the set of words $\mathcal{A}^*$ on alphabet $\mathcal{A}=\{a,a^{-1},b,b^{-1}\}$ and the set of admissible words $\mathcal{A}^*_0$ such that no two consecutive letters are inverse from each other. Map letter $a$ to $R_1$, $a^{-1}$ to $R_1^{-1}$, letter $b$ to $R_2$ and $b^{-1}$ to $R_2^{-1}$. The concatenation of words is mapped to the product of corresponding rotations. We refer to this mapping as $\varphi:\mathcal{A}_0^*\to SO_3$. For instance $\varphi(bab^{-1}a)=R_2R_1R_2^{-1}R_1$. Since, $R_1$ and $R_2$ are algebraically independent, map $\varphi$ is injective. Consider the directed Cayley graph $\mathcal{G}$ with vertices $\mathcal{V}=\mathcal{A}^*_0$ , edges set $E$ defined as:
\[(w,w')\in\mathcal{E}\Leftrightarrow\exists l\in\mathcal{A}, w'=wl\;.\]
Define the endpoint maps $\partial_0$ and $\partial_1$ $:E \to \mathcal{V}$ such that $\partial_0(e)=w$ and $\partial_1(e)=w'$ for $e=(w,w')\in E$. Equipped with its endpoint maps, $\mathcal{G}$ is called a \emph{combinatorial graph}. To turn $\mathcal{G}$ into a metric graph, we follow a standard construction, see, \emph{e.g.} \cite[p.7]{bridson:haefliger:1999}. Let us form the quotient set $X_{\mathcal{G}}=E\times [0,1]/\sim$ where the equivalence relation $\sim$ is such that $(e,i)\sim (e',i')$ iff $\partial_i(e)=\partial_{i'}(e')$, with $i,i'\in \{0,1\}$. We adopt the convention to choose $(e,1)$ to represent the equivalence class $\{(e,1),(e',0)\}$ when $\partial_0(e')=\partial_1(e)$. We then equip $X_{\mathcal{G}}$ with the standard metric distance $d_X$, as described, \emph{e.g.} in \cite[p.7]{bridson:haefliger:1999}.

To each couple $(e,\lambda)\in E \times [0,1]$ such that $e=(w,w')$, and $\partial_1(e)=x_1\dots x_n\in \mathcal{A}_0^*$ with  $x_i\in\mathcal{A}$ for all $i\leq n$ and $\lambda\in [0,1]$ we assign a rotation: $\psi(e,\lambda)=\varphi(x_1)\dots \varphi(x_{n-1})\varphi(x_n)^{\lambda}$. And denote by $\bar{\psi}$ its induced map on the quotient space $X_{\mathcal{G}}$. Because of the quotient identifications and the fact that $\varphi$ is injective, $\bar{\psi}$ is in turn injective on $X_{\mathcal{G}}$. The image $\mathcal{M}=\bar{\psi}(X_{\mathcal{G}})\subset SO_3$ is our state space, and its metric distance $d$ is either taken as the geodesic distance on $SO_3$ when geodesics are restricted to the set $\mathcal{M}$, or equivalently, derived from the distance on $X_{\mathcal{G}}$ by $d(x,y)=d_X(\bar{\psi}^{-1}(x),\bar{\psi}^{-1}(y))$ ($\bar\psi$ is an isometry from $(X_{\mathcal{G}},d_X)$ to $(\mathcal{M},d)$). The metric space thus defined is $CAT(0)$ \cite[p.167]{bridson:haefliger:1999}. Hence, in $(\mathcal{M},d)$, midpoints are well defined. For a simple illustration of this formal construction, see Figure~\ref{fig:metric graph}.

To compute the distance between two points $x_1\in\mathcal{M}$ and $x_2\in \mathcal{M}$. Let $(e_1,\lambda_1)=\bar{\psi}^{-1}(x_1)\in X_{\mathcal{G}}$ and $(e_2,\lambda_2)=\bar{\psi}^{-1}(x_2)\in X_{\mathcal{G}}$; $w_1=\partial_0(e_1) \in \mathcal{A}_0^*$ and $w_2=\partial_0(e_2)\in \mathcal{A}_0^*$. Denote by $p=s_1\dots s_p\in \mathcal{A}_0^*$ the longest common prefix of the words $w_1$ and $w_2$. Then we have two cases: Either $p\in\{w_1,w_2\}$, or $p\not\in\{w_1,w_2\}$. Suppose, for instance (the other case would be treated in the same way), that $p=w_2$, and $w_1 \neq w_2$: $w_2$ is a prefix of $w_1$. Then distance $d(x_1,x_2)$ is given by $|w_1|-|w_2|+\lambda_2-\lambda_1$ where $|w_1|$ (resp. $|w_2|$) is the length of the word $w_1$ (resp. $|w_2|$). If $w_1=w_2$ simply take $d(x_1,x_2)=|\lambda_2-\lambda_1|$. The second case is when $p\neq w_1$ and $p\neq w_2$ then, denoting $z=\bar{\psi}(e_p,1)$, where $e_p=(p_-,p)$ with $p_{-}=s_1\dots s_{p-1}$ and $p=s_1\dots s_p$, we get $d(x_1,x_2)=d(x_1,z)+d(z,x_2)=|w_1|+|w_2|-2|p|+\lambda_2+\lambda_1$.

To compute the midpoint $\langle \frac{x_1+x_2}{2} \rangle$ of two points $(x_1,x_2)\in \mathcal{M}^2$. Let $(e_1,\lambda_1)=\bar{\psi}^{-1}(x)$, $(e_2,\lambda_2)=\bar{\psi}^{-1}(y)$, $w_1=s_1\dots s_{l_1}=\partial_0(e_1)$ and $w_2=t_1\dots t_{l_2}=\partial_0(e_2)$. Let $p=u_1\dots u_{l_p}\in \mathcal{A}_0^*$ be the longest common prefix of $w_1$ and $w_2$ and $x_p=\bar{\psi}(e_p,1)$ ($e_p=(p_{-},p)$ is defined the same way as in the paragraph above). Here we have: $s_i,t_i,u_i\in \mathcal{A}$ and $l_1,l_2,l_p \geq 0$. Denote $D=\frac{d(x_1,z)+d(z,x_2)}{2}$ and $\langle \frac{x_1+x_2}{2} \rangle=(m,\lambda_{m})$. We have two cases:
\begin{itemize}
\item If $D < d(x_1,z)$ then: $m=u_1\dots u_{l_p}t_{l_p+1}\dots t_{L+1}$ and $\lambda_{m} =d(x_1,z)-D-L $; where $L=\lfloor d(x_1,z)-D \rfloor$.

\item If $D > d(x_1,z)$ then : $m=u_{1}u_2\dots u_{l_p}s_{l_p+1}\dots s_{L+1}$ where $L=\lfloor D-d(x_1,z) \rfloor$. And $\lambda_{m} =D-d(x,z)-L$.
\end{itemize}
We simulate the algorithm on a network of $N=20$ agents using elements $x_1,\dots,x_N\in\mathcal{M}$ such that for all $1\leq i\leq N$, $x_i=\bar\psi(e_i,\lambda_i)$ where $e_i=(w_{i,1},w_{i,2})$; the $\{w_{i,j}\}_{i\leq N, j\in \{1,2\}}\in \mathcal{A}_0^*$ are words of length $l_i\leq 30$. To generate the sequence $(e_i)_{1 \leq i \leq N}$, it suffices to generate the sequence $(w_{i,2})_{1 \leq i \leq N}$ since $w_{i,1}$ can be deduced from $w_{i,2}$ by removing its last letter. In order to generate an element $w_{i_0,2}$ of $\{w_{i,2}\}_{1\leq i \leq N}$, first we need to specify its length $l_{i_0}$ which is a positive integer randomly and uniformly chosen from $\{1,\dots,30\}$. We then construct $w_{i_0,2}$ in the following way: start by randomly and uniformly choosing a letter $s_1\in\mathcal{A}$, then for $k\in \{2,\dots,l_{i_0}\}$ randomly and uniformly choose a letter $s_k\in\mathcal{A}$; if $s_k=s_{k-1}^{-1}$ then re-sample $s_k$ again until $s_{k}\neq s_{k-1}^{-1}$. After the sequence $\{w_{i,j}\}_{i\leq N, j\in \{1,2\}}$ is generated, the  $(\lambda_i)_{1\leq i\leq N}$ are sampled uniformly in $[0,1]$. The underlying network is the complete graph $K_N$. In figure~\ref{fig:F2} we plot $n \to \log\sigma^2_n$ and obtain a result that is in accordance with the prediction of theorem~\ref{the:speed}: one can observe a linearly decreasing $\log\sigma^2_n$ (or, equivalently $\log\Delta_n$) which means that consensus is indeed achieved exponentially fast.
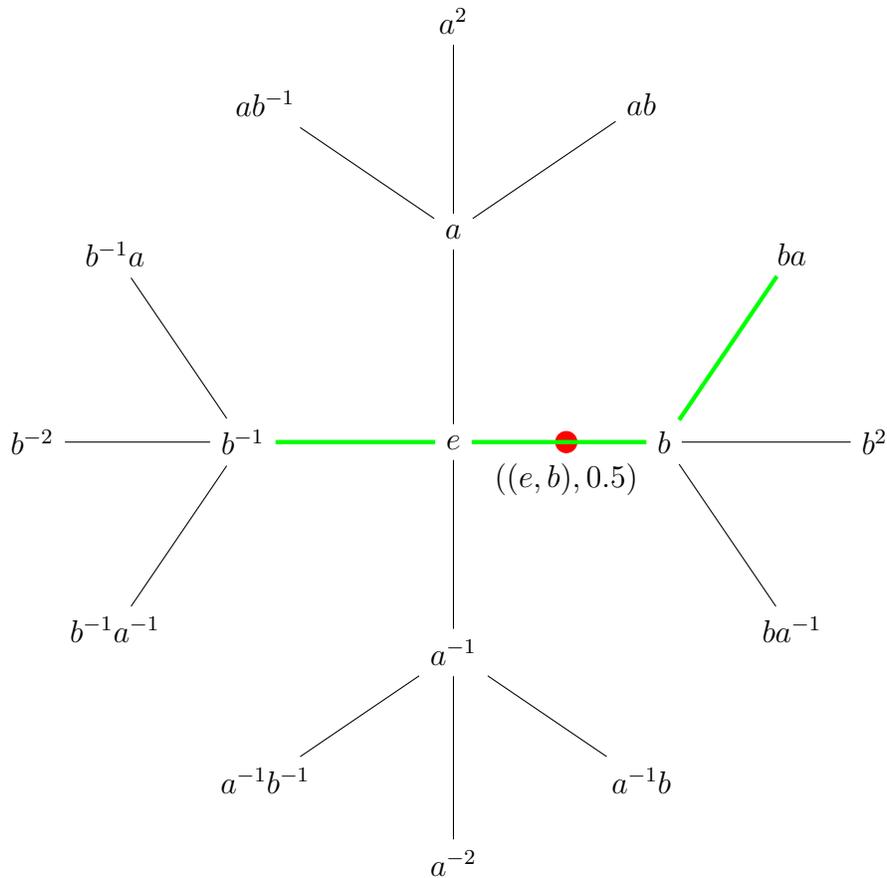
\begin{figure}[ht!]
\centering
\begin{tikzpicture}
\node (e) at (0,0) {$e$};
\node (a) at (2.8,0) {$b$};
\node (-a) at (-2.8,0) {$b^{-1}$};
\node (b) at (0,2.8) {$a$};
\node (-b) at (0,-2.8) {$a^{-1}$};
\node (2a) at (5.6,0) {$b^2$};
\node (2b) at (0,5.6) {$a^2$};
\node (a-b) at (4.5,-2.5) {$ba^{-1}$};
\node (-2b) at (0,-5.6) {$a^{-2}$};
\node (-2a) at (-5.6,0) {$b^{-2}$};
\node (-b+a) at (2.5,-4.5) {$a^{-1}b$};
\node (-b-a) at (-2.5,-4.5) {$a^{-1}b^{-1}$};
\node (a+b) at (4.5,2.5) {$ba$};
\node (b+a) at (2.5,4.5) {$ab$};
\node (b-a) at (-2.5,4.5) {$ab^{-1}$};
\node (-a+b) at (-4.5,2.5) {$b^{-1}a$};
\node (-a-b) at (-4.5,-2.5) {$b^{-1}a^{-1}$};
\filldraw[red] (1.5,0) circle (4pt) node[anchor=south] {};
\node (m') at (1.5,-0.5) {$((e,b),0.5)$};

\draw [ultra thick, green] (e)--(a);
\draw [ultra thick, green] (e)--(-a);
\draw (e)--(b);
\draw (e)--(-b);
\draw (a)--(2a);
\draw (-a)--(-2a);
\draw (b)--(2b);
\draw (-b)--(-2b);
\draw (a)--(a-b);
\draw (-b)--(-b+a);
\draw (-b)--(-b-a);
\draw [ultra thick, green] (a)--(a+b);
\draw (b)--(b+a);
\draw (b)--(b-a);
\draw (-a)--(-a+b);
\draw (-a)--(-a-b);
\end{tikzpicture}
\caption{Metric graph of words of length $2$ from the alphabet $\mathcal{A}$. All edges have the same length $1$. To draw a point $x=(e,\lambda)$ on the graph, first go the vertex $e$. Then, seen as a copy of the segment $(0,1]$, one can draw a point on the segment such that its distance from $\partial_0(e)$ is $\lambda$. After drawing two points $x_1$ and $x_2$, it becomes easy to find the shortest path between them. In this example, $x_1=((e,b^{-1}),1)$ and $x_2=((b,ba),1)$ the geodesic relating them is drawn in green, and their midpoint $\langle\frac{x_1+x_2}{2}\rangle=((e,b),0.5)$ is seen in red. $d(x_1,x_2)=3$.}
\label{fig:metric graph}
\end{figure}

\begin{figure}[ht!]
  \centering
  \includegraphics[width=0.8\linewidth]{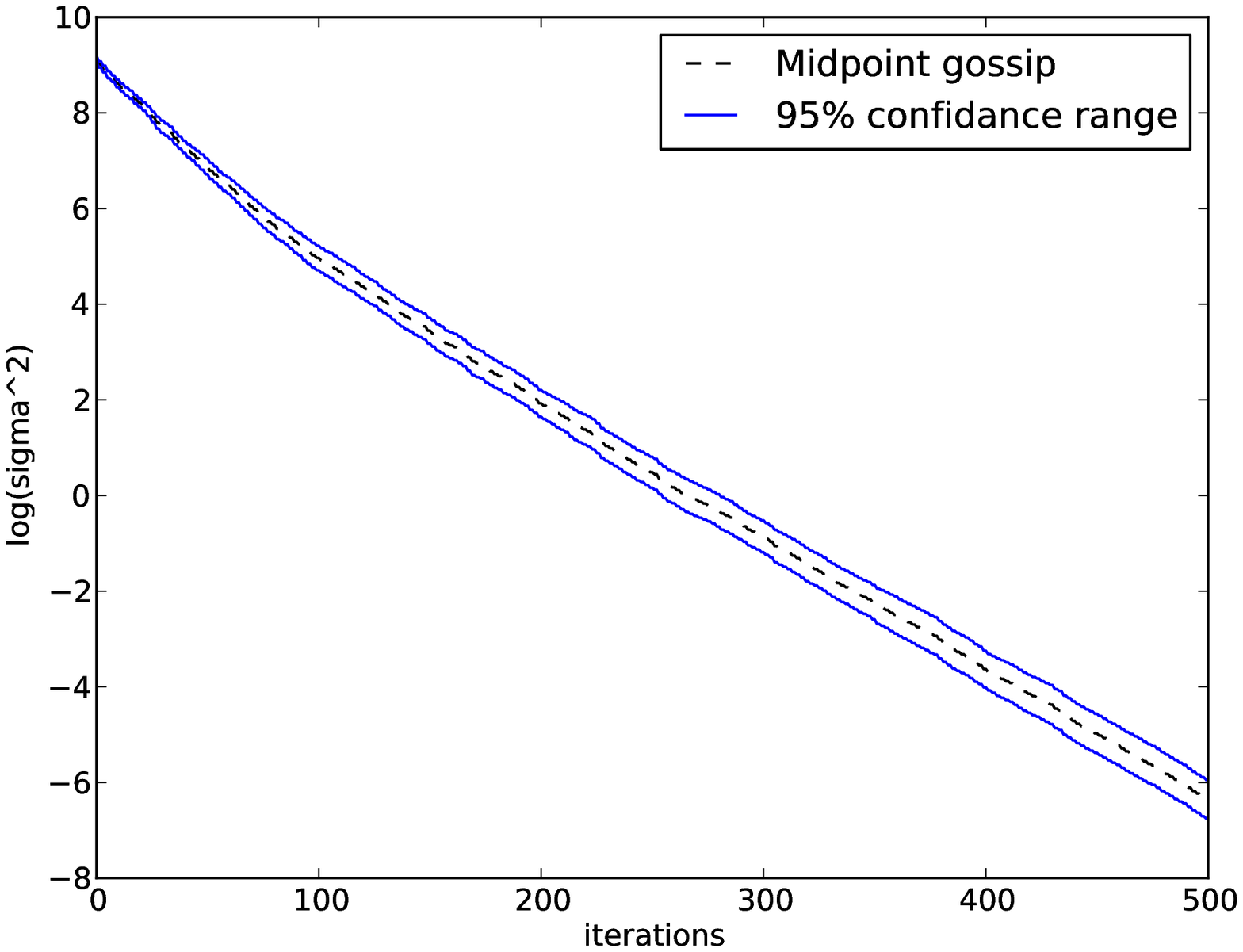}
  \caption{Plot of $n\mapsto \log\sigma^2_n$ for the metric graph. Because of the stochastic nature of the algorithm, $50$ simulations are done and we plot $\log\sigma_n^2$ as well as a confidence domain which contains $95\%$ of the simulated curves.}
  \label{fig:F2}
\end{figure}

\subsection{The sphere}

In this example, we shall consider the $3$-dimensional unit sphere $S^2=\{(x,y,z)\in \mathbb{R}^3| x^2+y^2+z^2=1\}$ equipped with the distance $d(a,b)=\cos^{-1}(\langle a,b \rangle)$ such that $0\leq d(a,b)\leq \pi$ for all $a,b \in S^2$. Since, two antipodal points on the sphere have an infinite number of minimizing geodesics linking them, we sample the initial set of points inside a small portion of the sphere. Quantitatively, we choose the quarter of a sphere; in our numerical experiments we chose $\mathcal{Q}=\{(x,y,z) \in S^2| x >0, y >0, z >0\}$ which is of diameter $r_1=\frac{\pi}{2}$ thus convex and thus $CAT(1)$ (as a convex subset of the model space $\mathcal{M}_1^3$ -- with an abuse of language since $\mathcal{M}_\kappa^n$ is only defined up to an isometry).

Note that the sphere does not possess a vector space structure and thus one cannot use classical Arithmetic gossiping without a reprojection step.

We sample a set of $N=30$ points uniformly from $\mathcal{Q}$ as initial step.
The expression of a geodesic $\gamma(t)$ on $\mathcal{Q}$ such that $\gamma(0)=p$ and $\gamma(1)=q$ and $p \neq q$ is given by:
\[\gamma(t)=\sin\bigg(\cos^{-1}(\langle p,q\rangle)t\bigg) \frac{q-\langle p,q \rangle p}{\sqrt{1-\langle p,q \rangle^2}}+\cos\bigg(\cos^{-1}(\langle p,q\rangle)t\bigg)p\;.\]

The total number of iterations is 500, for the graph, we use a complete graph and the path graph. By plotting the variance function $\sigma^2_1(X_n)=\frac{1}{N}\sum_{\{i,j\}\in\mathcal{P}_2(V)}\chi_1(d(x_i(n),x_j(n))$ with respect to the number of iterations we get figure~\ref{fig:Sphere} (for the complete graph) and figure~\ref{fig:SpherePn} (for the path graph): we observe in both cases that $n\mapsto \log\sigma^2_1(X_n)$ (or equivalently $n\mapsto \log\Delta_1$) is a linear function with negative slope which is in accordance with theorem~\ref{the:speedk}. Convergence in the case of the path graph is slower than that of the complete graph (the slope of $n\mapsto \log\sigma^2_1(X_n)$ for the path graph is smaller in absolute value than the one for the complete graph), which highlights the influence of graph connectivity on the speed of convergence.

\begin{figure}[ht!]
  \centering
  \includegraphics[width=0.8\linewidth]{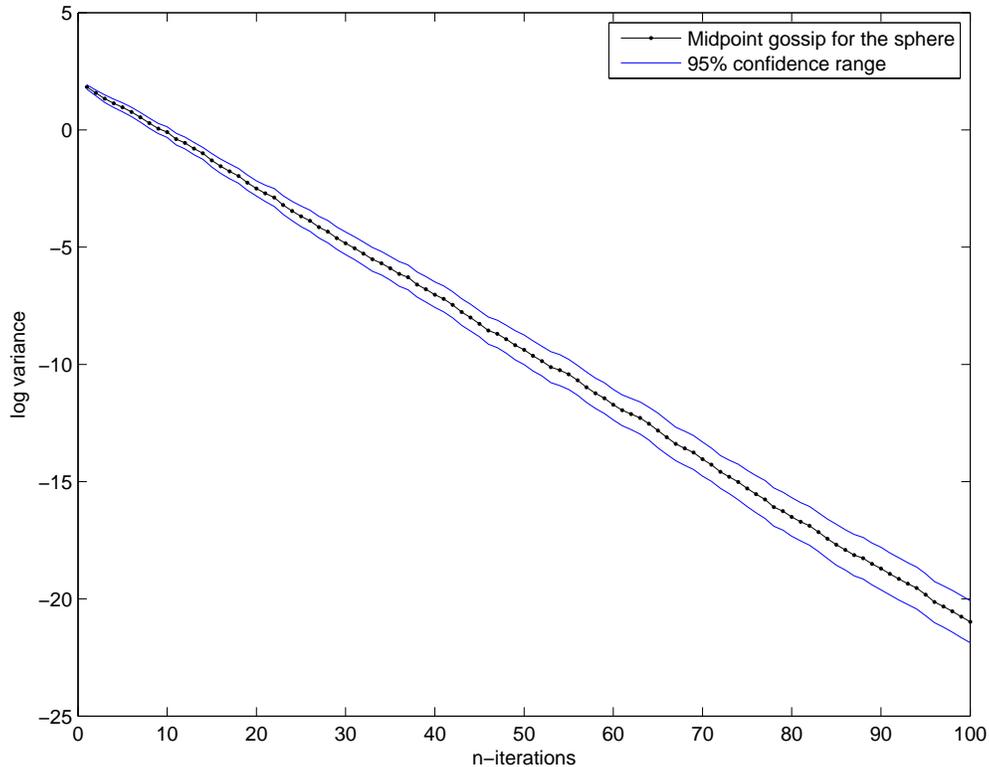}
  \caption{Plot of $n\mapsto \log\sigma^2_1(X_n)$ where $n$ is the iteration index and $\mathcal{M}=S^2$ (complete graph $K_N$). Because of the stochastic nature of the algorithm, $100$ simulations are done and we plot $\log\sigma^2_1$ as well as a confidence domain which contains $95\%$ of the simulated curves.}
  \label{fig:Sphere}
\end{figure}

\begin{figure}[ht!]
  \centering
  \includegraphics[width=0.8\linewidth]{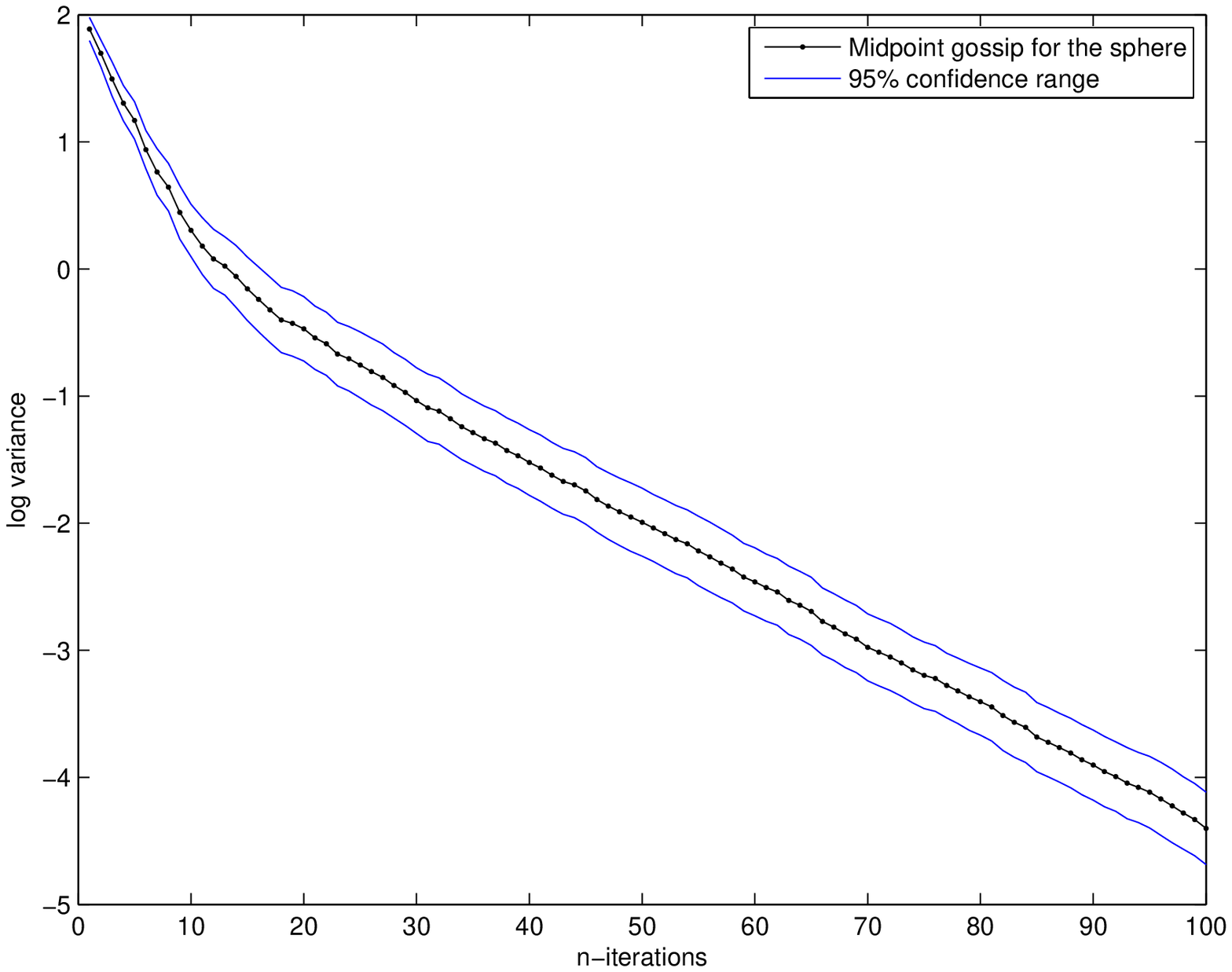}
  \caption{Plot of $n\mapsto \log\sigma^2_1(X_n)$ where $n$ is the iteration index and $\mathcal{M}=S^2$ (path graph $P_n$). The graph is complete. Because of the stochastic nature of the algorithm, $50$ simulations are done and we plot $\log\sigma^2_1$ as well as a confidence domain which contains $95\%$ of the simulated curves here again the slope is smaller then the complete graph case.}
  \label{fig:SpherePn}
\end{figure}

\subsection{Group of rotations}

We shall be interested in what follows in the \emph{rotations group} $SO_3$ of the Euclidean space $\mathbb{R}^3$. A rotation $R_{a,\theta}$ acting on $\mathbb{R}^3$ is characterized by its axis of rotation $a\in \mathbb{R}^3$ and its rotation angle $\theta \in [-\pi,\pi)$; the eigenvalues of $R_{a,\theta}$ are: $\{e^{i\theta},e^{-i\theta},1\}$. One of the possible applications of Algorithm~\ref{algo:gpapa} with data in $SO_3$ is a network of $3D$ cameras \cite{tron2008distributed} that seeks to achieve a consensus in order to estimate the pose of an object.

$SO_3$ is a Lie group, its Lie algebra is $\mathfrak{so}_3$ the space of $3\times 3$ skew-symmetric matrices. The Riemannian metric at identity is given by $\langle v_1, v_2 \rangle= \frac{1}{2}\tr(v_1^Tv_2)$ for $v_1,v_2 \in \mathfrak{so}_3$. In this metric, the sectional curvature of $SO_3$ is given by the formula: $\kappa(\sigma)=\frac{1}{4}||[X,Y]||^2$ with $X,Y\in\mathfrak{so}_3$ orthonormal generators of $\sigma$ \cite[p.103]{DoCarmo}. The collection of matrices $(M_{i,j})_{1 \leq i < j \leq 3}$  such that: $M_{i,j}=(E_{i,j}-E_{j,i})$ forms an orthonormal basis for the space of skew-symmetric matrices, where $(E_{i,j})_{1 \leq i,j\leq 3}$ is the canonical basis of $M_3(\mathbb{R})$. One can check on this basis that $k(\sigma)\equiv\frac14$. The sectional curvature of $SO_3$ is thus identically $\kappa\equiv\frac{1}{4}$. This implies that $r_\kappa=\frac{\pi}{2}$. Toponogov comparison theorem \cite[p.400]{Chavel} shows that the geodesic ball $\mathcal{B}$ with center $I_3$ and diameter $r_{\kappa}$ is a $CAT(\frac{1}{4})$ space.

The exponential function $\exp:\mathfrak{so}_3 \mapsto SO_3$ is given by the convergent series $\exp(X)=\sum_{k=0}^{\infty} \frac{X^k}{k!}$. Since the injectivity radius of $SO_3$ is $>\pi$~{\cite[p.406]{Chavel}, $\exp$ is a diffeomorphism from $\mathfrak{B}(0,r_\kappa) \subset \mathfrak{so}_3$ to $\mathcal{B}$. If $R_{a,\theta}$ is a rotation matrix, we say that $X\in \mathfrak{so}_3$ is a logarithm of $R$ iff: $\exp(X)=R_{a,\theta}$. When $R_{a,\theta}$ does not have $-1$ as an eigenvalue, it is possible to define the \emph{principal logarithm} $\log(R_{a, \theta})$ such that the eigenvalues of $\log(R_{a, \theta})$ lie in $\mathcal{S}=\{z\in \mathbb{C}| -\pi <\Im(z)<\pi \}$. For example, the matrix: \[X_k=(\theta+2\pi k)\left[ {\begin{array}{ccc} 0 & -1 & 0\\ 1 & 0 &0\\ 0 & 0 &0\\ \end{array}} \right]\] with $\theta \in (-\pi,\pi)$, is a logarithm of: \[R_{z,\theta}=\left[ {\begin{array}{ccc} \cos(\theta) & -\sin(\theta) & 0\\ \sin(\theta) & \cos(\theta) & 0\\ 0 & 0 & 1\\ \end{array}} \right] \] for every integer $k\in\mathbb{Z}$ but the principal logarithm of $R_{z,\theta}$ is $\log(R_{z,\theta})=X_0$. In what follows $\log$ will denote the principal logarithm function.

Let, $(R_{a_1,\theta_1},R_{a_2,\theta_2}) \in \mathcal{B}^2$. If $-1$ is not an eigenvalue of $R_{a_1,\theta_1}^TR_{a_2,\theta_2}$, then the distance between the two elements is: \[d(R_{a_1,\theta_1},R_{a_2,\theta_2})^2=\frac{1}{2}||\log(R_{a_1,\theta_1}^TR_{a_2,\theta_2})||^2=[\alpha]^2\] where $\{e^{i[\alpha]},e^{-i[\alpha]},1\}$ are the eigenvalues of $(R_{a_1,\theta_1}^TR_{a_2,\theta_2})$, such that $[\alpha] \in (-\pi,\pi)$. If ${-1}$ is an eigenvalue of $R_{a_1,\theta_1}^TR_{a_2,\theta_2}$ then $d(R_{a_1,\theta_1},R_{a_2,\theta_2})=\pi$, and ($R_{a_1,\theta_1}$, $R_{a_2,\theta_2}$) are said to be \emph{antipodal points}.

For $R_{a,\theta}\in \mathcal{B}$ we have $d(I_3,R_{a,\theta})=|\theta|<\frac{\pi}{4}$, and for $(R_{a_1,\theta_1},R_{a_2,\theta_2}) \in \mathcal{B}^2$ we have $|[\alpha ]|=d(R_{a_1,\theta_1},R_{a_2,\theta_2})\leq d(I_3,R_{a_1,\theta_1})+d(I_3,R_{a_2,\theta_2})<\frac{\pi}{2}$ which implies that $-1$ cannot be an eigenvalue of $R_{a_1,\theta_1}^TR_{a_2,\theta_2}$. Thus $\mathcal{B}$ does not contain antipodal points.

For all $(R_{a_1,\theta_1},R_{a_2,\theta_2})\in \mathcal{B}^2$ there exists a unique minimizing geodesic $\gamma(t)$ such that $\gamma(0)=R_{a_1, \theta_1}$ and $\gamma(1)=R_{a_2, \theta_2}$, and it has the following expression: \[\gamma(t)=R_{a_1,\theta_1}\exp\left(t\log(R_{a_1,\theta_1}^TR_{a_2,\theta_2})\right)\]
Since $\mathcal{B}$ is strongly convex \cite[p.404]{Chavel}, $\gamma(t)\in \mathcal{B}$ for all $t\in[0,1]$. The expression of the midpoint is thus: $\langle \frac{R_{a_1,\theta_1}+R_{a_2,\theta_2}}{2} \rangle = \sqrt{R_{a_1,\theta_1}R_{a_2,\theta_2}}$.

In the numerical simulation presented in this paper, we sample $N=30$ rotation matrices $(R_i)_{1\leq i\leq N}\in\mathcal{B}$. The underlying graph is the complete graph $K_N$. The results of the experiment are displayed in figure~\ref{fig:ortho} where we plot the logarithm of: $\sigma^2_{\frac{1}{4}}(X_n)$ as a function of $n$. We observe that $n\mapsto\log \sigma^2_{\frac{1}{4}}(X_n)$ decreases linearly, which is in accordance with theorem~\ref{the:speedk}.

\begin{figure}[ht!]
  \centering
  \includegraphics[width=0.8\linewidth]{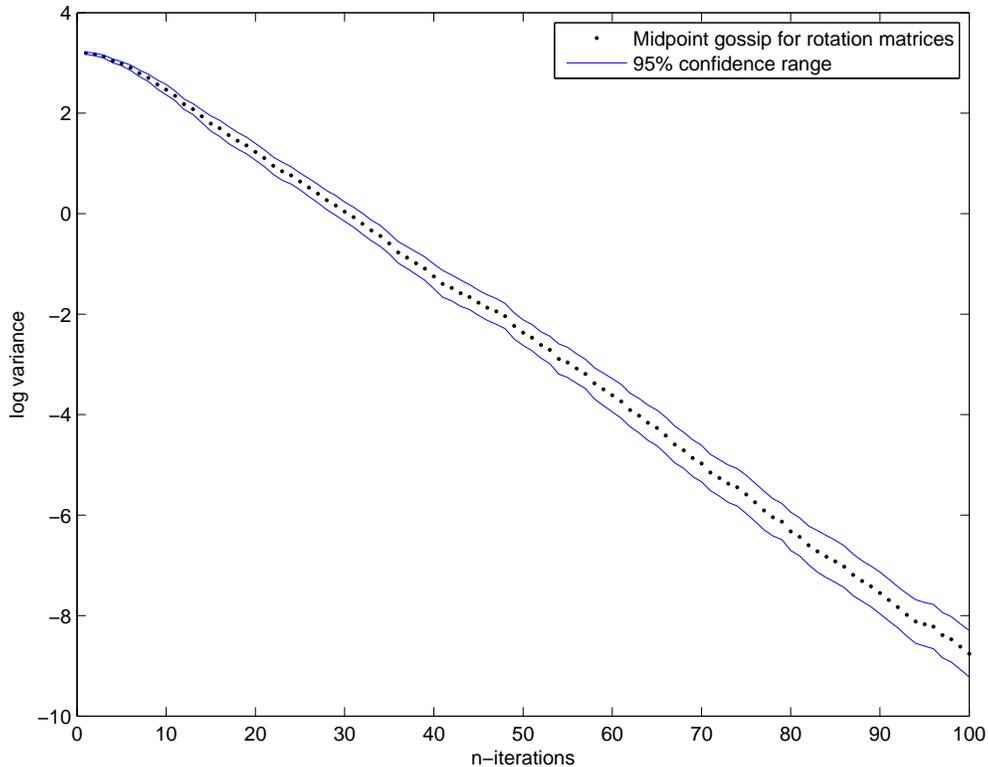}
  \caption{Plot of $n\mapsto\log \sigma^2_{\frac{1}{4}}(n)$ where $n$ is the iteration index, and $\mathcal{M}=SO_3$. Because of the stochastic nature of the algorithm, $50$ simulations are done and we plot $\log\sigma^2_{\frac{1}{4}}$ as well as a confidence domain which contains $95\%$ of the simulated curves. The average midpoint algorithm exhibits exponential convergence towards a consensus state.}
  \label{fig:ortho}
\end{figure}

\section{Acknowledgments}

The authors warmly thanks Julien Hendrickx for fruitful discussions.

\section{Conclusion}
\label{sec:conclu}
We have presented an extension to the (RPG) to the case of $CAT(\kappa)$ spaces in the asynchronous pairwise case. We identified a set of conditions on the curvature ($\kappa=0$) that guarantees a global convergence of the Random Pairwise Midpoint, for $\kappa > 0 $ only a local convergence result has been proven. The algorithm converges in each case towards an arbitrary consensus state at exponential speed. Our experiments with positive definite matrices, the metric graph associated to the free group with two generators, the sphere, and the three dimensional special orthogonal group agree with theoretical results and validate our approach.


\appendix

\section{$CAT(\kappa)$ metric spaces}
\label{sec:catk}

Unless further qualification is made, in all that follows $\kappa$ will denote an arbitrary real number and $n$ an integer strictly greater than $1$. $CAT(\kappa)$ metric spaces are defined using comparisons with model spaces that are defined below. Assuming familiarity with Riemannian Geometry, the \emph{model space} $\mathcal{M}_\kappa^n$ denotes any complete, simply connected, $n$-dimensional Riemannian manifold, with constant sectional curvature $\kappa$. It can be shown that all such Riemannian manifolds are indeed isometric, hence the name ``the'' model space $\mathcal{M}_\kappa^n$. However, for the sake of completeness and readability, we follow the treatment of, \emph{e.g.} \cite{bridson:haefliger:1999}, and provide below a simple metric construction of $\mathcal{M}_\kappa^n$, freed from any reference to differential geometry.

\subsection{Model Space $\mathcal{M}_\kappa^n$}

In order to properly define the model space, we need three prototype spaces: the euclidean space, the sphere and the hyperbolic space. General model spaces are then simply derived by dilation.

Let us denote $\mathcal{E}^{n}$ the vector space $\mathbb{R}^{n}$ equipped with its standard Euclidean norm $\|x\|^2 = \sum_{i=0}^{n-1} x_i^2$ with $x=(x_0,\dots, x_{n-1})\in\mathcal{E}^{n}$. Denote
\[
\mathcal{S}^n = \{(x_0,\dots,x_n)\in\mathcal{E}^{n+1}: \sum_{i=0}^n x_i^2 = 1\}\,,
\]
the $n$-dimensional unit Euclidean sphere and
\[
\mathcal{H}^n = \{(x_0,\dots,x_n)\in\mathcal{E}^{n+1}:(-x_0^2+\sum_{i=1}^nx_i^2) = -1, x_0>0\}\,,
\]
one sheet of a two-sheets $n$-dimensional hyperboloid.
As a metric space $\mathcal{E}^{n}$ is equipped with distance $d_E(x,y) = \|x-y\|^2$, $\mathcal{S}_\kappa^n$ is equipped with distance $0\leq d_S(x,y)\leq \pi$ such that
\[
\cos\left(d_S(x,y)\right) = \sum_{i=0}^n x_iy_i\,,
\]
whereas $\mathcal{H}_n$ is equipped with distance $d_H(x,y)\geq 0$ such that:
\[
\cosh \left(d_H(x,y)\right) = x_0y_0 - \sum_{i=1}^nx_iy_i\;.
\]
\begin{rem}
Function $d_S$ is well defined since for $(x,y)\in(\mathcal{S}^n)^2$, $-1\leq \sum x_iy_i\leq 1$. Note that if $(x_0,\dots,x_n)\in\mathcal{H}^n$ then necessarily $x_0\geq 1$. For all $(x,y)\in\mathcal{H}^n$,
\[
\sum_{i=1}^nx_iy_i\leq\left(\sum_{i=1}^nx_i^2\right)^{1/2}\left(\sum_{i=1}^ny_i^2 \right)^{1/2}= (x_0^2 - 1)^{1/2}(y_0^2 - 1)^{1/2}\leq x_0y_0\;.
\]
Thus, function $d_H$ is well defined. We refer to \cite[chap~I.2]{bridson:haefliger:1999} for the proof that $d_S$ and $d_H$ satisfy the requirements of distance functions (triangle inequality is not obvious).
\end{rem}
\begin{rem}\label{rem:diam}
The diameter $\diam(\mathcal{S}^n)$ equals $\pi$ and is attained for any two \emph{antipodal} points $x$ and $y=-x$, while $\sup_{(x,y)\in(\mathcal{H}^n)^2}d(x,y) = +\infty$.
\end{rem}
We are now in a position to provide a definition of the model spaces $\mathcal{M}_\kappa^n$.
\begin{defn}
The \emph{model space} $(\mathcal{M}_\kappa^n,\bar d)$ is the metric space defined by:
\[
\begin{cases}
  \mathcal{M}_\kappa^n = \mathcal{H}^n, \bar d=|\kappa|^{-1/2} d_H(\cdot,\cdot) & \text{if }\kappa < 0\\
  \mathcal{M}_\kappa^n = \mathcal{E}^n, \bar d = d_E(\cdot,\cdot) & \text{if }\kappa = 0\\
  \mathcal{M}_\kappa^n = \mathcal{S}^n, \bar d = \kappa^{-1/2} d_S(\cdot,\cdot) & \text{if }\kappa > 0
\end{cases}
\]
\end{defn}
\begin{rem}
We only make use of $n=2$ in the defining equality of $CAT(\kappa)$ spaces. However, we would not have gained much restricting ourselves to the case $n=2$ to define the previous model spaces.
\end{rem}
The following proposition is easily derived from remark~\ref{rem:diam}.
\begin{prop}
The diameter of $\mathcal{M}_\kappa^n$ is given by $D_\kappa$, where:
\[
D_\kappa =
\begin{cases}
+\infty & \text{if }\kappa\leq 0\\
\frac{\pi}{\sqrt\kappa} & \text{if }\kappa>0
\end{cases}
\]
\end{prop}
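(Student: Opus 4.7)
The plan is to dispatch the three cases of the definition of $\mathcal{M}_\kappa^n$ separately, since in each case the diameter is essentially given by remark~\ref{rem:diam}, up to a scaling factor. The only work to do is to track how the distance rescaling $|\kappa|^{-1/2}$ (or $\kappa^{-1/2}$) interacts with the supremum of pairwise distances.

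First I would handle the easy case $\kappa = 0$: here $\mathcal{M}_0^n = \mathcal{E}^n$ equipped with the unrescaled Euclidean distance, and for any fixed unit vector $u$ and any $t>0$ one has $d_E(0, tu) = t$, so the supremum of distances is $+\infty = D_0$. Next, for $\kappa < 0$, I would note that $\mathcal{M}_\kappa^n = \mathcal{H}^n$ carries the metric $\bar d = |\kappa|^{-1/2} d_H$; since remark~\ref{rem:diam} gives $\sup_{(x,y)\in(\mathcal{H}^n)^2} d_H(x,y) = +\infty$ and the scaling factor $|\kappa|^{-1/2}$ is a strictly positive constant, the supremum remains $+\infty = D_\kappa$. (Concretely, one can take a sequence $y_k \in \mathcal{H}^n$ with $y_{k,0}\to +\infty$ and use $\cosh(d_H(x,y_k)) = x_0 y_{k,0} - \sum_{i\geq 1} x_i y_{k,i} \to +\infty$.)

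Finally, the substantive case $\kappa > 0$: here $\mathcal{M}_\kappa^n = \mathcal{S}^n$ with $\bar d = \kappa^{-1/2} d_S$. Remark~\ref{rem:diam} already provides $\diam(\mathcal{S}^n, d_S) = \pi$, attained at antipodal points. Thus
\[
\diam(\mathcal{M}_\kappa^n) = \kappa^{-1/2}\,\sup_{(x,y)\in(\mathcal{S}^n)^2} d_S(x,y) = \kappa^{-1/2}\pi = \frac{\pi}{\sqrt\kappa} = D_\kappa,
\]
and this supremum is still attained (now by an antipodal pair). Combining the three cases matches the piecewise definition of $D_\kappa$.

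There is no real obstacle: the content is entirely in remark~\ref{rem:diam}, and the proposition is a bookkeeping exercise showing that multiplying the prototype metric by a positive constant scales its diameter by that same constant. The only mild point to mention is that when $d_S$ or $d_H$ is derived from the trigonometric/hyperbolic identities given in the excerpt, one must verify that the supremum of distances is unaffected by the positive rescaling — which is immediate since $\diam$ is positively homogeneous of degree one in the metric.
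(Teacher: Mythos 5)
Your proof is correct and follows exactly the route the paper intends: the paper offers no written proof, merely stating that the proposition ``is easily derived from remark~\ref{rem:diam}'', and your case-by-case argument with the observation that the rescaling $|\kappa|^{-1/2}$ scales the diameter accordingly is precisely that derivation spelled out.
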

In what follows we also use notation $r_\kappa = \frac{D_\kappa}2$.

\subsection{Segments, Length, Angle, Triangles}

Let us recall some definitions related to metric spaces. Details can be found, \emph{e.g.} in \cite{bridson:haefliger:1999}.

The following definition generalizes the Euclidean case, where $\|x-y\| = \|x-z\| + \|z-y\|$ implies that $z$ belongs to the segment $[x,y]$.
Throughout the rest of the paper $(\mathcal{M},d)$ denotes a metric space.
\begin{defn}[Geodesic, Segments]
A path $c:[0,l]\to\mathcal{M}$, $l\geq0$ is said to be a \emph{geodesic} if $d(c(t),c(t'))=|t-t'|$, for all $(t,t')\in[0,l]^2$; $x=c(0)$ and $y=c(1)$ are the \emph{endpoints} of the geodesic and $l = d(x,y)$ is the \emph{length} of the geodesic. The image of $c$ is called a \emph{geodesic segment} with endpoints $x$ and $y$. If there is a single segment with endpoints $x$ and $y$, it is denoted $[x,y]$.
\end{defn}

\begin{defn}[Midpoint]
The \emph{midpoint} of segment $[x,y]$ is denoted $\left\langle \frac{x+y}2\right\rangle$, it is defined as the unique point $m$ such that $d(x,m) = d(y,m) = d(x,y)/2$.
\end{defn}
Please note that defining $\left\langle\frac{x+y}2\right\rangle$ involves actually no addition nor scalar multiplication. This notation makes an analogy of the Euclidean case where midpoints indeed correspond to arithmetic means.

\begin{defn}[Triangle]
A triplet of geodesic $c_i:[0,l_i]\to\mathcal{M}$ with $i=0,1,2$ is said a geodesic triangle if and only if $c_i(l_i) = c_{i+1\mod 3}(0)$. Images of the geodesics $c_i$ are called the \emph{sides} of the triangle, their endpoints $c_i(0)$ are called the \emph{vertices} of the triangle.
\end{defn}

\begin{defn}[Comparison Triangles in $\mathcal{M}_\kappa^n$]
Assume $p,q$ and $r$ are three points in $\mathcal{M}$. A comparison triangle in $\mathcal{M}_\kappa^n$ refers to any three points, provided they exist, $\bar p,\bar q$ and $\bar r$ in $\mathcal{M}_\kappa^n$ such that $\bar d(\bar p,\bar q) = d(p,q)$, $\bar d(\bar q,\bar r) = d(q,r)$, and $\bar d(\bar r,\bar p) = d(r,p)$.
\end{defn}
Concerning the existence and uniqueness of such comparison triangles, we provide without proof the following proposition (see for instance \cite{bridson:haefliger:1999} for a proof).
\begin{prop}
Assume $p,q$ and $r$ are three points in $\mathcal{M}$ such that $d(p,q)+d(q,r)+d(r,p)<2D_\kappa$ and $max(d(p,q),d(q,r),d(r,p))\leq D_\kappa$. Then there exists a comparison triangle in $\mathcal{M}_\kappa^n$. Moreover, this comparison triangle is unique up to an isometry.
\end{prop}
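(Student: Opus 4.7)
The plan is to reduce to the two-dimensional case and then run an intermediate value argument on a model circle. Concretely, any three points in $\mathcal{M}_\kappa^n$ lie on a totally geodesic two-dimensional subspace isometric to $\mathcal{M}_\kappa^2$, so it suffices to build the comparison triangle in $\mathcal{M}_\kappa^2$. Using the fact that the isometry group of $\mathcal{M}_\kappa^2$ acts transitively on pointed oriented geodesic rays, I fix a basepoint $\bar p$ and a unit-speed geodesic ray $\gamma$ emanating from $\bar p$. Since $d(p,q)\leq D_\kappa$, I set $\bar q = \gamma(d(p,q))$. It remains to produce $\bar r$ at the prescribed distances from $\bar p$ and $\bar q$.

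For this, consider the circle $C=\{x\in\mathcal{M}_\kappa^2 : \bar d(\bar p,x) = d(p,r)\}$, parametrize it by the angle $\theta\in[0,\pi]$ at $\bar p$ between $\gamma$ and the minimizing geodesic to $x$, and study the continuous function $\theta\mapsto f(\theta) = \bar d(\bar q, x(\theta))$. The law of cosines in each of the three model geometries gives, with $a=d(p,q)$, $b=d(p,r)$, an explicit formula: $f(\theta)^2 = a^2+b^2-2ab\cos\theta$ for $\kappa=0$, and analogous formulas with $\cos$ or $\cosh$ in positive/negative curvature. A direct computation shows that as $\theta$ ranges over $[0,\pi]$, $f$ is strictly monotone and attains every value in $[|a-b|,\,a+b]$ when $\kappa\leq 0$, and every value in $\bigl[|a-b|,\,\min(a+b,\,2D_\kappa - a - b)\bigr]$ when $\kappa>0$. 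The metric-space triangle inequalities applied to $p,q,r$ guarantee $|a-b|\leq d(q,r)\leq a+b$, while the perimeter hypothesis $d(p,q)+d(q,r)+d(r,p)<2D_\kappa$ precisely furnishes the extra bound $d(q,r)<2D_\kappa-a-b$ needed in the spherical case. The intermediate value theorem then yields $\theta^\star\in[0,\pi]$ with $f(\theta^\star)=d(q,r)$; setting $\bar r = x(\theta^\star)$ gives a comparison triangle.

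For uniqueness up to an isometry of $\mathcal{M}_\kappa^n$, given another comparison triangle $(\bar p',\bar q',\bar r')$, I use transitivity of the isometry group on pointed oriented orthonormal frames to bring $\bar p'$ onto $\bar p$ and the ray toward $\bar q'$ onto $\gamma$, forcing the image of $\bar q'$ to coincide with $\bar q$. The image of $\bar r'$ then lies on $C$ and satisfies $\bar d(\bar q, \cdot) = d(q,r)$, and the strict monotonicity of $f$ on $[0,\pi]$ combined with the reflection symmetry across the geodesic line through $\bar p$ and $\bar q$ shows there are exactly two such points, exchanged by that reflection; composing with this reflection gives the required isometry. The principal obstacle is the spherical case $\kappa>0$: one must check carefully that both the lower and upper endpoints of the range of $f$ are honored, and it is precisely here that the perimeter condition $d(p,q)+d(q,r)+d(r,p)<2D_\kappa$ enters, preventing the triangle from wrapping around the sphere.
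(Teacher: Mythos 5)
The paper states this proposition \emph{without proof}, deferring to Bridson--Haefliger, so there is no in-paper argument to compare against; your proof is essentially the standard one from that reference (reduce to $\mathcal{M}_\kappa^2$, place $\bar p,\bar q$ on a fixed geodesic, sweep $\bar r$ along the circle of radius $d(p,r)$ about $\bar p$, and apply the law of cosines plus the intermediate value theorem), and it is correct, including the key observation that the perimeter bound $<2D_\kappa$ is exactly what guarantees $d(q,r)\leq 2D_\kappa-d(p,q)-d(p,r)$ in the spherical case. Two minor points worth noting: the strict monotonicity of $f$ (and the angle parametrization itself) breaks down in the degenerate cases $d(p,q)=0$ or $d(p,r)=0$, which should be dispatched separately (they are trivial); and the hypothesis $\max(d(p,q),d(q,r),d(r,p))\leq D_\kappa$ is in fact implied by the perimeter condition together with the triangle inequality, since $2\,d(p,q)\leq d(p,q)+d(p,r)+d(r,q)<2D_\kappa$, so no antipodal degeneracy can occur.
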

\begin{rem}
Note that the proposition is straightforward for $\kappa=0$, where triangle inequality is a necessary and sufficient condition for existence of comparison triangles and is automatically satisfied for a triplet of points in $\mathcal{M}$.
\end{rem}
There is also a notion of angle in this ``metric'' context, as illustrated by the next definition.
\begin{defn}[Alexandrov Angle]
Assume $c:[0,l]\to\mathcal{M}$ and $c':[0,l']\to\mathcal{M}$ are two geodesics such that $x = c(0)=c'(0)$, $y=c(l)$ and $z=c'(l')$. For each $0\leq t\leq l$ and $0\leq t'\leq l'$, consider a comparison triangle $(\bar x, \bar y_t,\bar z_{t'})$ in $\mathcal{M}_0^2$ for the triplet $(x,c(t),c'(t'))$. The angle between $c$ and $c'$ at $x$, denoted $\angle(c,c')$ is defined by:
\[
\angle(c,c') = \lim_{\epsilon\to 0}\sup_{0\leq t,t'\leq\epsilon}\bar\angle_{\bar x}([\bar x,\bar y_t],[\bar x,\bar z_{t'}])
\]
where $\bar\angle_{\bar x}([\bar x,\bar y_t],[\bar x,\bar z_{t'}])$ denotes the angle at $\bar x$ of the comparison triangle $(\bar x,\bar y_t,\bar z_{t'})$.
\end{defn}

\subsection{$CAT(\kappa)$ metric spaces}

The concepts from metric triangle geometry presented in the previous subsections yield the following definition of $CAT(\kappa)$ spaces.

\begin{defn}[$CAT(\kappa)$ inequality]
Assume $(\mathcal{M},d)$ is a metric space and $\Delta = (c_0,c_1,c_2)$ is a geodesic triangle with vertices $p = c_0(0)$, $q=c_1(0)$ and $r=c_2(0)$ and with perimeter strictly less than $2D_\kappa$. Let $\bar\Delta=(\bar p,\bar q,\bar r)$ denote a comparison triangle in $\mathcal{M}_\kappa^2$. $\Delta$ is said to satisfy the $CAT(\kappa)$ inequality if for any $x = c_0(t)$ and $y=c_2(t')$, one has:
\[
d(x,y) \leq \bar d(\bar x,\bar y)
\]
where $\bar x$ is the unique point of $[\bar p,\bar q]$ such that $d(p,x) = \bar d(\bar p,\bar q)$ and $\bar y$ on $[\bar p,\bar r]$ such that $d(p,y) = \bar d(\bar p,\bar y)$.
\end{defn}
\begin{rem}
Applying this inequality to the case where $\kappa\leq 0$ and $d(q,r)=0$, the uniqueness of geodesics is recovered when $\kappa\leq 0$.
\end{rem}

\begin{defn}[$CAT(\kappa)$ metric space]
A metric space $(\mathcal{M},d)$ is said $CAT(\kappa)$ if every geodesic triangle with perimeter less than $2D_\kappa$ satisfies the $CAT(\kappa)$ inequality.
\end{defn}
\begin{prop}\label{prop:cat0midpoints} If $x$ and $y$ are two points in a $CAT(0)$ space; there is a unique geodesic $[x,y]$ and the midpoint $\left\langle\frac{x+y}2\right\rangle$ is always well defined and unique.
\end{prop}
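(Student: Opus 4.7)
The plan is to separate existence from uniqueness. Existence of the geodesic segment $[x,y]$ is part of the standing convention that a $CAT(\kappa)$ space is a $D_\kappa$-geodesic metric space in the sense of \cite{bridson:haefliger:1999}; for $\kappa=0$ this simply says that any two points are joined by at least one geodesic. Alternatively one can use completeness together with approximate midpoints (built via the $CAT(0)$ inequality and dyadic subdivision) to produce a geodesic. Either way, existence is essentially free, so the substantive claim to establish is the two uniqueness statements.

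For the uniqueness of the geodesic, suppose $c_0,c_1:[0,l]\to\mathcal{M}$ are two geodesics with $c_i(0)=x$, $c_i(l)=y$, where $l=d(x,y)$. I would form the (necessarily degenerate) geodesic triangle with vertices $p=x$, $q=y$, $r=y$, taking as sides $c_0$, the constant geodesic at $y$, and $c_1$. A comparison triangle $(\bar p,\bar q,\bar r)$ in $\mathcal{M}_0^2=\mathcal{E}^2$ must satisfy $\bar d(\bar q,\bar r)=d(q,r)=0$, hence $\bar q=\bar r$ and the comparison triangle collapses to the Euclidean segment $[\bar p,\bar q]$ of length $l$. For each $t\in[0,l]$, applying the $CAT(0)$ inequality to the points $x_t=c_0(t)$ on the side $[p,q]$ and $y_t=c_1(t)$ on the side $[p,r]$ gives $d(x_t,y_t)\leq \bar d(\bar x_t,\bar y_t)=0$, since both comparison points coincide with the point of $[\bar p,\bar q]$ at distance $t$ from $\bar p$. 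Hence $c_0\equiv c_1$. This is precisely the observation flagged in the remark following the $CAT(\kappa)$ inequality.

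For the midpoint, uniqueness of the geodesic $c:[0,l]\to\mathcal{M}$ immediately gives the canonical candidate $m:=c(l/2)$, which satisfies $d(x,m)=d(m,y)=l/2$. Conversely, if $m'$ is any point with $d(x,m')=d(m',y)=l/2$, then $d(x,m')+d(m',y)=d(x,y)$, so concatenating any geodesic from $x$ to $m'$ with any geodesic from $m'$ to $y$ produces a geodesic from $x$ to $y$ passing through $m'$; by the uniqueness just proven, this concatenation coincides with $c$, forcing $m'=c(l/2)=m$.

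The only genuine obstacle is the tacit existence question for geodesics, which is standard but not addressed by the paper's bare-bones metric definition. Once existence is granted, the entire proof reduces to a single application of the $CAT(0)$ inequality to a degenerate comparison triangle, followed by a short triangle-inequality argument for the midpoint.
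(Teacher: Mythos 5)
Your argument is correct, and it follows exactly the route the paper itself gestures at: the paper states Proposition~\ref{prop:cat0midpoints} without proof, but its remark following the definition of the $CAT(\kappa)$ inequality (``applying this inequality to the case where $\kappa\leq 0$ and $d(q,r)=0$, the uniqueness of geodesics is recovered'') is precisely your degenerate-comparison-triangle step, and your concatenation argument correctly reduces midpoint uniqueness to geodesic uniqueness. Your caveat about existence is also well taken: as written, the paper's definition of a $CAT(\kappa)$ space does not require the space to be ($D_\kappa$-)geodesic, so existence of $[x,y]$ and of the midpoint must either be added to the definition, as in \cite{bridson:haefliger:1999}, or derived from completeness (Assumption~\ref{assum:cat0}) via approximate midpoints; with that convention made explicit, the proof is complete.
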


One has the so-called Bruhat-Tits inequality, which is a straightforward application of the $CAT(0)$ inequality:
\begin{prop}
Assume $(\mathcal{M},d)$ is $CAT(0)$, and $\Delta$ is a geodesic triangle with vertices $(p,q,r)$ such that $m$ is the midpoint of $q$ and $r$ along the side of the triangle; then,
\begin{equation}
2d(p,m)^2 \leq d(p,q)^2 + d(p,r)^2 - d(q,r)^2/2
\end{equation}
\end{prop}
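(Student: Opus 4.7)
The plan is to reduce the inequality to its Euclidean analogue (the parallelogram / median length identity) via the $CAT(0)$ comparison principle, since the right-hand side is exactly what the Euclidean identity would give with equality.

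First, I would build a comparison triangle $(\bar p, \bar q, \bar r)$ in the model plane $\mathcal{M}_0^2 = \mathbb{R}^2$, which exists because in the flat case the triangle inequality for $(d(p,q), d(q,r), d(r,p))$ is automatic (cf.\ the remark after the comparison-triangle proposition). Let $\bar m$ denote the midpoint of the segment $[\bar q, \bar r]$ in $\mathbb{R}^2$. By construction, $\bar d(\bar q, \bar m) = \bar d(\bar r, \bar m) = \tfrac{1}{2}\bar d(\bar q, \bar r) = \tfrac{1}{2} d(q,r)$, so $\bar m$ is precisely the comparison point of the midpoint $m$ on the side $[q,r]$ of the original triangle.

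Next I would invoke the $CAT(0)$ inequality with $x = m$ on the side $[q,r]$ and $y = p$ (taken as a vertex, hence an endpoint of either adjacent side). This yields
\[
d(p, m) \;\leq\; \bar d(\bar p, \bar m),
\]
and therefore $d(p,m)^2 \leq \bar d(\bar p, \bar m)^2$. At this point the problem reduces to a flat computation.

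Finally, in $\mathbb{R}^2$ the parallelogram law applied to the vectors $\bar p - \bar q$ and $\bar p - \bar r$, together with $\bar m = \tfrac{1}{2}(\bar q + \bar r)$, gives the median identity
\[
2\,\bar d(\bar p, \bar m)^2 \;=\; \bar d(\bar p, \bar q)^2 + \bar d(\bar p, \bar r)^2 - \tfrac{1}{2}\,\bar d(\bar q, \bar r)^2.
\]
Substituting the equalities $\bar d(\bar p, \bar q) = d(p,q)$, $\bar d(\bar p, \bar r) = d(p,r)$, $\bar d(\bar q, \bar r) = d(q,r)$ and combining with the comparison inequality from the previous step produces the claimed bound. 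There is no real obstacle here: the only point that deserves a line of justification is identifying $\bar m$ as the comparison point of $m$, which follows immediately from the midpoint characterization via distances.
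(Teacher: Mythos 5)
Your proof is correct and is exactly the argument the paper intends when it calls this ``a straightforward application of the $CAT(0)$ inequality'': compare with a flat triangle, bound $d(p,m)$ by $\bar d(\bar p,\bar m)$, and conclude with the Euclidean median identity. The paper gives no further detail, so there is nothing to add beyond your (correctly noted) identification of $\bar m$ as the comparison point of $m$.
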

\begin{rem}
Note that $CAT(0)$ inequality does apply to all geodesic triangles since the diameter restriction is void when $\kappa\leq 0$.
\end{rem}

For notational convenience, define the functions: $C_{\kappa}(t)=\cos(\sqrt{\kappa}t)$, $S_{\kappa}(t)=\frac{\sin(\sqrt{\kappa}t)}{\sqrt{\kappa}}$, $\chi_{\kappa}(t)=1-C_{\kappa}(t)$.

\begin{lem}[Law of Cosines]
\label{lem:cosines}
Given a complete manifold $\mathcal{M}_\kappa^n $ with constant sectional curvature $\kappa$ and a geodesic triangle $\Delta(pqr)$ in $\mathcal{M}_\kappa^n$, assume $\max\{d(p,r),d(q,r),d(p,q)\}<r$ and let $\alpha := \arc{prq}$. We have:

\[C_\kappa(d(p,q))= C_\kappa(d(p,r))C_\kappa(d(q,r))+S_\kappa(d(p,r))S_\kappa(d(q,r))\cos(\alpha) \]
\end{lem}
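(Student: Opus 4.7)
The plan is to reduce the statement to the three prototype model spaces $\mathcal{S}^n$, $\mathcal{E}^n$, and $\mathcal{H}^n$ via dilation, and then verify the identity in each case by embedding the model space in an ambient linear space where the distance is expressible through an inner product.

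First I would observe that the identity is invariant under the dilation $\bar d = \lambda d$: if one rescales distances by $\lambda>0$ the curvature becomes $\kappa/\lambda^2$, and the arguments of $C_\kappa$ and $S_\kappa$ take the form $\sqrt{\kappa}\,d$, which is preserved. Hence by the definition of $\mathcal{M}_\kappa^n$ in the previous subsection, it suffices to treat the prototype cases $\kappa=1$ (use $\mathcal{S}^n\subset\mathcal{E}^{n+1}$), $\kappa=0$ (use $\mathcal{E}^n$), and $\kappa=-1$ (use $\mathcal{H}^n\subset\mathcal{E}^{n+1}$ equipped with the Minkowski form $\langle x,y\rangle_{1,n}=x_0y_0-\sum_{i\geq1}x_iy_i$). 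The case $\kappa=0$ is trivial since both sides equal $1$; the genuine content of the lemma is the spherical ($\kappa=1$) and hyperbolic ($\kappa=-1$) formulas.

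For $\kappa=1$, I would use the ambient characterization $\cos d(x,y)=\langle x,y\rangle$ on $\mathcal{S}^n$. Since the hypothesis $\max\{d(p,r),d(q,r),d(p,q)\}<r$ places everything inside a convex ball, the geodesic from $r$ to $p$ is the unique great circle arc, with initial unit tangent vector $u_p\in T_r\mathcal{S}^n$ (the Euclidean orthogonal complement of $r$ in $\mathbb{R}^{n+1}$); analogously define $u_q$. An elementary computation with great circles gives the decomposition
\[
p=\cos(d(r,p))\,r+\sin(d(r,p))\,u_p,\qquad q=\cos(d(r,q))\,r+\sin(d(r,q))\,u_q.
\]
Taking the Euclidean inner product and exploiting $\langle r,u_p\rangle=\langle r,u_q\rangle=0$ yields
\[
\cos d(p,q)=\cos d(r,p)\cos d(r,q)+\sin d(r,p)\sin d(r,q)\,\langle u_p,u_q\rangle,
\]
so the identity reduces to showing $\langle u_p,u_q\rangle=\cos\alpha$. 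The case $\kappa=-1$ is entirely parallel, using $\cosh d(x,y)=\langle x,y\rangle_{1,n}$ and the hyperbolic decomposition $p=\cosh(d(r,p))\,r+\sinh(d(r,p))\,u_p$ together with the Minkowski-orthogonal tangent vectors (so that $\langle u_p,u_p\rangle_{1,n}=-1$, which neatly produces the sign pattern of the hyperbolic law of cosines).

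The main obstacle will be the last identification $\langle u_p,u_q\rangle=\cos\alpha$, where $\alpha$ is the Alexandrov angle defined in the previous subsection via Euclidean comparison triangles and a $\limsup$ as the secondary vertices approach $r$. To handle it I would pick small $t,t'>0$, write out the squared Euclidean distance between the comparison points $\bar y_t$ and $\bar z_{t'}$ using the Euclidean law of cosines, and independently Taylor-expand the right-hand side of the spherical/hyperbolic identity just derived (applied to the small subtriangle $(r,c(t),c'(t'))$) to second order in $t,t'$. The leading behaviour of both expressions is $t^2+t'^2-2tt'\cos\alpha$ on one side and $t^2+t'^2-2tt'\langle u_p,u_q\rangle$ on the other, from which the equality $\cos\alpha=\langle u_p,u_q\rangle$ follows after taking the limit; this is where the hypothesis $\max\{d(p,r),d(q,r),d(p,q)\}<r$ is really used, ensuring that the Taylor expansion is valid and that the relevant $\sin$ or $\sinh$ factors are nonzero. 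Once this identification is established, the formula is proved on each prototype space, and the scaling argument completes the proof.
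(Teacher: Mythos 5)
The paper never proves Lemma~\ref{lem:cosines}: it is quoted as the classical law of cosines in the constant-curvature model space (cf.\ \cite{bridson:haefliger:1999}, Prop.~I.2.13) and is used only once, inside the proof of Lemma~\ref{lem:trigo}. So there is no in-paper argument to compare yours against; what you propose is the standard textbook proof, and it is sound in outline. In particular, isolating the identification of the Alexandrov angle with the Riemannian angle $\langle u_p,u_q\rangle$ as the real content, and establishing it via the second-order expansion $t^2+t'^2-2tt'\langle u_p,u_q\rangle$ of the comparison distance, is exactly right; the ambient decompositions $p=\cos(a)r+\sin(a)u_p$ and $p=\cosh(a)r+\sinh(a)u_p$ are correct.

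There is, however, one genuine wrinkle that your own dilation step exposes if carried out literally. With the paper's normalization $S_\kappa(t)=\sin(\sqrt\kappa t)/\sqrt\kappa$, the product $S_\kappa(a)S_\kappa(b)$ picks up a factor $\lambda^2$ under $d\mapsto\lambda d$, $\kappa\mapsto\kappa/\lambda^2$, so the displayed identity is \emph{not} dilation-invariant; the invariant (and correct) form is $C_\kappa(c)=C_\kappa(a)C_\kappa(b)+\kappa\,S_\kappa(a)S_\kappa(b)\cos\alpha$. As printed, the statement is literally true only for $\kappa=1$ (where your spherical computation proves it); for $\kappa=-1$ it has the wrong sign, and for $\kappa=0$ the right-hand side is $1+ab\cos\alpha$ rather than $1$, so your claim that the flat case is ``trivially $1=1$'' only holds after the factor $\kappa$ is restored (or $S_\kappa$ is redefined as $\sin(\sqrt\kappa t)$). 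This does not propagate anywhere: in Lemma~\ref{lem:trigo} the two $S_\kappa S_\kappa\cos$ terms cancel since $\cos\gamma+\cos(\pi-\gamma)=0$. Two smaller points: the hypothesis $\max\{d(p,r),d(q,r),d(p,q)\}<r$ should read $<r_\kappa$ ($r$ is already a vertex), and in the hyperbolic case the correct identification is $\langle u_p,u_q\rangle_{1,n}=-\cos\alpha$, since the Riemannian metric is the \emph{negative} of the restricted Minkowski form in your sign convention --- you allude to this sign pattern but should state it explicitly, as ``entirely parallel'' glosses over the one place where the two cases differ.
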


We deduce the following result.
\begin{lem}
\label{lem:trigo}
For for any triangle $\Delta(pqr)$ in $\mathcal{M}$ where $m$ is the midpoint of $[p,q]$ we have:
\[C_\kappa(d(p,r))+C_\kappa(d(q,r)) \leq 2C_\kappa(d(m,r)) C_\kappa \left(\frac{d(p,q)}{2}\right)\]
\end{lem}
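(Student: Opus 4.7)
The plan is to prove the inequality by first establishing the \emph{corresponding equality} in the model space $\mathcal{M}_\kappa^2$ via two applications of the Law of Cosines (Lemma~\ref{lem:cosines}), and then transferring it back to $\mathcal{M}$ using the $CAT(\kappa)$ inequality. The statement in a general $CAT(\kappa)$ space is weaker than what actually holds in the model space, so the inequality comes precisely from the comparison step.

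First I would form a comparison triangle $\bar\Delta(\bar p\bar q\bar r)$ in $\mathcal{M}_\kappa^2$ for $\Delta(pqr)$, so that $\bar d(\bar p,\bar q)=d(p,q)$, $\bar d(\bar p,\bar r)=d(p,r)$, $\bar d(\bar q,\bar r)=d(q,r)$, and I would denote by $\bar m$ the midpoint of $[\bar p,\bar q]$ in the model space. Set $\beta := \arc{\bar p\bar m\bar r}$ and $\beta' := \arc{\bar q\bar m\bar r}$. The key geometric observation is that because $\bar m$ lies on the geodesic segment $[\bar p,\bar q]$, the two angles at $\bar m$ are supplementary, so $\cos(\beta') = -\cos(\beta)$.

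Next I would apply Lemma~\ref{lem:cosines} to each of the two sub-triangles $\bar\Delta(\bar p\bar m\bar r)$ and $\bar\Delta(\bar q\bar m\bar r)$, noting that $\bar d(\bar p,\bar m) = \bar d(\bar q,\bar m) = d(p,q)/2$:
\begin{align*}
C_\kappa(d(p,r)) &= C_\kappa(d(p,q)/2)\,C_\kappa(\bar d(\bar m,\bar r)) + S_\kappa(d(p,q)/2)\,S_\kappa(\bar d(\bar m,\bar r))\cos(\beta), \\
C_\kappa(d(q,r)) &= C_\kappa(d(p,q)/2)\,C_\kappa(\bar d(\bar m,\bar r)) + S_\kappa(d(p,q)/2)\,S_\kappa(\bar d(\bar m,\bar r))\cos(\beta').
\end{align*}
Summing the two identities the cross terms cancel because $\cos(\beta)+\cos(\beta')=0$, producing the model-space \emph{equality}
\[
C_\kappa(d(p,r)) + C_\kappa(d(q,r)) \;=\; 2\,C_\kappa(d(p,q)/2)\,C_\kappa(\bar d(\bar m,\bar r)).
\]
Finally I would invoke the $CAT(\kappa)$ inequality applied to $\Delta(pqr)$ and its comparison triangle, which gives $d(m,r) \leq \bar d(\bar m,\bar r)$. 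Since $C_\kappa(t)=\cos(\sqrt\kappa\,t)$ is non-increasing on $[0,\pi/\sqrt\kappa]$, this yields $C_\kappa(\bar d(\bar m,\bar r)) \leq C_\kappa(d(m,r))$, and under the working assumption $d(p,q) < r_\kappa$ the factor $C_\kappa(d(p,q)/2)$ is nonnegative, so multiplying through preserves the direction and yields the claimed bound.

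The main obstacle is really just \emph{bookkeeping the diameter constraints}: one must check that the two sub-triangles satisfy the hypotheses of Lemma~\ref{lem:cosines} (which in turn relies on convexity of the ball of radius $r_\kappa$ around $\bar r$ to keep $\bar d(\bar m, \bar r) < r_\kappa$), that the comparison triangle exists and is unique, and that the arguments of $C_\kappa$ stay within the interval on which $C_\kappa$ is both non-increasing and nonnegative. Once this bookkeeping is in place, the analytic content is just adding two cosine laws and using the supplementary-angle cancellation at the midpoint.
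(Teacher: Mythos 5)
Your proposal is correct and follows essentially the same route as the paper's proof: form a comparison triangle, apply the law of cosines (Lemma~\ref{lem:cosines}) at the midpoint to the two sub-triangles so the supplementary-angle terms cancel, obtain the model-space equality, and then use the $CAT(\kappa)$ comparison $d(m,r)\leq \bar d(\bar m,\bar r)$ together with the monotonicity of $C_\kappa$. If anything, you are more careful than the paper about the sign of $C_\kappa(d(p,q)/2)$ and the diameter constraints, which the published argument leaves implicit.
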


\begin{proof}
Lets consider the triangle $\Delta(pqr)$ in  $\mathcal{M}$. We denote the geodesic midpoint of $p$ and $q$ by $m=\langle\frac{p+q}{2}\rangle$. Let $\Delta(p'q'r')$ be a comparison triangle to $\Delta(pqr)$ in $\mathcal{M}_\kappa^n$ and $m'$ a comparison point to $m$. A fundamental characterization of $CAT(\kappa)$~\cite{} spaces is that $d(r,m) < d(r',m')$. We apply lemma \ref{lem:trigo} to triangles comparison triangles $\Delta(p'm'r')$ and $\Delta(r'm'q')$.

\begin{figure}[ht!]
\centering
\begin{tikzpicture}

\draw (0,0)--(1,-2);
\draw (0,0)--(1,3);
\draw (1,3)--(1,-2);
\draw (0,0)--(1,0.5);
\draw (1,0.5) node {$\bullet$};
\node [left] at (0,0) {r};
\node [above] at (1,3) {p};
\node [below] at (1,-2) {q};
\node [right] at (1,0.5) {m};

\draw (4,1)--(5,4);
\draw (4,1)--(5,1.5);
\draw (5,4)--(5,1.5);
\node [left] at (4,1) {r};
\node [above] at (5,4) {p};
\node [right] at (5,1.5) {m};
\draw (4.8,1.4)  to [out=100,in=45] (5,1.7);
\draw (5,2.25) node {$\times$};
\draw (4.5,1.25) node {$||$};
\node [above] at (4.8,1.55) {$\gamma$};

\draw (4,-1)--(5,-3);
\draw (4,-1)--(5,-0.5);
\draw (5,-3)--(5,-0.5);
\node [left] at (4,-1) {r};
\node [below] at (5,-3) {q};
\node [right] at (5,-0.5) {m};
\draw (4.7,-0.65)  to [out=-90,in=180] (5,-0.9);
\node [below] at (4.9,-0.77) {$\pi-\gamma$};
\draw (5,-2.25) node {$\times$};
\draw (4.5,-0.75) node {$||$};

\draw (7,1) to [out=90,in=-120]  (8,4);
\draw (7,1)--(8,1.5);
\draw (8,4)--(8,1.5);
\node [left] at (7,1) {r'};
\node [above] at (8,4) {p'};
\node [right] at (8,1.5) {m'};
\draw (7.8,1.4)  to [out=100,in=45] (8,1.7);
\draw (8,2.25) node {$\times$};
\draw (7.5,1.25) node {$||$};
\node [above] at (7.8,1.55) {$\gamma$};

\draw (7,-1) to [out=-90 ,in=125] (8,-3);
\draw (7,-1)--(8,-0.5);
\draw (8,-3)--(8,-0.5);
\node [left] at (7,-1) {r'};
\node [below] at (8,-3) {q'};
\node [right] at (8,-0.5) {m'};
\draw (7.7,-0.65)  to [out=-90,in=180] (8,-0.9);
\node [below] at (7.9,-0.77) {$\pi-\gamma$};
\draw (8,-2.25) node {$\times$};
\draw (7.5,-0.75) node {$||$};
\end{tikzpicture}
\label{fig:comparaison}
\end{figure}

\[
C_\kappa(d'(p',r'))=C_\kappa(d'(m',r'))C_\kappa\left(\frac{d'(p',q')}{2}\right)+S_\kappa(d'(m',r'))S_\kappa\left(\frac{d'(p',q')}{2}\right)\cos(\gamma')
\]
And \[C_\kappa(d'(q',r'))=C_\kappa(d'(m',r;))C_\kappa\left(\frac{d'(p',q')}{2}\right)+S_\kappa(d'(m',r'))S_\kappa\left(\frac{d'(p',q')}{2}\right)\cos(\pi-\gamma')\]
Summing the two equations we get:\[C_\kappa(d'(p',r'))+C_\kappa(d'(q',r'))=2C_\kappa(d'(m',r'))C_\kappa\left(\frac{d'(p',q')}{2}\right)\]

This in turn implies since $\Delta(pqr)$ and $\Delta(p'q'r')$ are comparison triangles that:
$C_\kappa(d(p,r))+C_\kappa(d(q,r))=2C_\kappa(d'(m',r'))C_\kappa\left(\frac{d(p,q)}{2}\right)$
Since $C_\kappa$ is decreasing in $[0,\frac{\pi}{\sqrt{K}}]$ and that $d(r,m) < d(r',m')$, we get
\[C_\kappa(d(p,r))+C_\kappa(d(q,r)) \leq 2C_\kappa(d(m,r))C_\kappa\left(\frac{d(p,q)}{2}\right)\,;\]
Which is the desired result.
\end{proof}

\begin{prop}[\cite{bridson:haefliger:1999}[prop. II.1.4]]
\label{prop:catk}
Let $\mathcal{M}$ denote a $CAT(\kappa)$ metric space.
\begin{enumerate}
\item If $x$ and $y$ in $\mathcal{M}$ are such that $d(x,y)<D_\kappa$, there exists a unique geodesic $[x,y]$ joining them.
\item For any $x\in\mathcal{M}$, the ball $B_{x,r}$ with $r<r_\kappa$ is convex.
\end{enumerate}
\end{prop}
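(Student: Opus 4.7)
The plan is to prove the two parts in order, using a single comparison-triangle argument for uniqueness and then a convexity-of-model-space-balls reduction for convexity.

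For part (1), I would argue by collapsing one side of a geodesic triangle. Suppose $c,c':[0,L]\to\mathcal{M}$ are two geodesics with $c(0)=c'(0)=x$, $c(L)=c'(L)=y$, and $L=d(x,y)<D_\kappa$. View $(c,c',\mathrm{const}_y)$ as a (possibly degenerate) geodesic triangle; its perimeter is $2L<2D_\kappa$, so a comparison triangle exists in $\mathcal{M}_\kappa^2$. Because $\bar d(\bar x,\bar y)+\bar d(\bar y,\bar y)=\bar d(\bar x,\bar y)$, the comparison triangle degenerates to a single geodesic segment $[\bar x,\bar y]$, and for every $t\in[0,L]$ the comparison points of $c(t)$ and $c'(t)$ coincide (both lie at distance $t$ from $\bar x$ along $[\bar x,\bar y]$). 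The $CAT(\kappa)$ inequality then yields $d(c(t),c'(t))\le 0$, so $c=c'$.

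For part (2), fix $x\in\mathcal{M}$ and $r<r_\kappa$, and take $p,q\in B(x,r)$. The triangle inequality gives $d(p,q)\le d(p,x)+d(x,q)<2r<D_\kappa$, so by part (1) there is a unique geodesic $c:[0,d(p,q)]\to\mathcal{M}$ from $p$ to $q$. The perimeter of the geodesic triangle $\Delta(x,p,q)$ is strictly less than $r+r+2r=4r<2D_\kappa$, so a comparison triangle $\bar\Delta(\bar x,\bar p,\bar q)$ exists in $\mathcal{M}_\kappa^2$. For any $m=c(t)$ with comparison point $\bar m$ on $[\bar p,\bar q]$, the $CAT(\kappa)$ inequality gives $d(x,m)\le\bar d(\bar x,\bar m)$. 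It therefore suffices to prove that the ball of radius $r$ around $\bar x$ in $\mathcal{M}_\kappa^2$ is convex, which reduces the problem to the model space.

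The main obstacle is thus the convexity of balls of radius $r<r_\kappa$ in $\mathcal{M}_\kappa^2$. For $\kappa\le 0$ the model space is $\mathcal{E}^2$ or a rescaled $\mathcal{H}^2$, both uniquely geodesic and with balls convex for all radii (the distance function to a point is convex along geodesics, by an elementary computation with the coordinate formulas for $d_E$ and $d_H$). For $\kappa>0$ the model space is a round sphere of radius $\kappa^{-1/2}$ and $r_\kappa=\pi/(2\sqrt\kappa)$, so a ball of radius $r<r_\kappa$ is a spherical cap whose angular half-width is strictly less than $\pi/2$; the law of cosines (Lemma~\ref{lem:cosines}) applied in the cap shows that along any geodesic from $\bar p$ to $\bar q$ inside the cap, the distance to $\bar x$ stays bounded by $\max(\bar d(\bar x,\bar p),\bar d(\bar x,\bar q))<r$, giving convexity. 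Combined with the $CAT(\kappa)$ inequality this yields $d(x,m)<r$, so $[p,q]\subset B(x,r)$, proving (2).
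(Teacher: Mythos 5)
The paper itself does not prove Proposition~\ref{prop:catk}; it is quoted from Bridson--Haefliger (Prop.~II.1.4), so there is no in-paper argument to compare yours against. The closest internal analogue is the appendix proof of ball convexity in the $CAT(0)$ case, which uses exactly the comparison estimate $d(x_0,z)\leq\max(d(x_0,x),d(x_0,y))$ that you use in part (2). Your two arguments are the standard ones and are sound as far as they go: the degenerate-triangle trick (collapsing one side to a point so that the comparison triangle becomes a single segment) is precisely the textbook proof of uniqueness, and your reduction of part (2) to convexity of balls of radius $<r_\kappa$ in $\mathcal{M}_\kappa^2$ is correct, including the spherical case --- writing the great-circle arc as $\gamma(t)=\frac{\sin((1-t)\alpha)}{\sin\alpha}\bar p+\frac{\sin(t\alpha)}{\sin\alpha}\bar q$, the two coefficients are nonnegative and sum to at least $1$, so the inner product of $\gamma(t)$ with the cap's center is at least the minimum of the endpoints' inner products, which is positive exactly because the angular radius is $<\pi/2$; this is the content of your law-of-cosines step and it does hold.

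The one genuine gap is in part (1): you prove only uniqueness, whereas the statement also asserts existence, and nothing in your argument produces a geodesic joining $x$ and $y$. Under the definition of a $CAT(\kappa)$ space given in this paper (``every geodesic triangle of perimeter less than $2D_\kappa$ satisfies the $CAT(\kappa)$ inequality'') existence is in fact not derivable at all --- a metric space containing no geodesics satisfies that definition vacuously. In Bridson--Haefliger the definition additionally requires the space to be $D_\kappa$-geodesic, and that hypothesis is where existence comes from. You should either state explicitly that existence is part of the definition in force, or, if one wants to work only from completeness (as in Assumptions~\ref{assum:cat0} and~\ref{assum:catk}), supply the standard approximate-midpoint argument for complete length spaces. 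The same omission propagates into part (2), where you silently use the geodesics $[x,p]$ and $[x,q]$ to form the triangle $\Delta(x,p,q)$; once existence is granted for pairs at distance $<D_\kappa$, everything else in your part (2) is fine, including the perimeter bound $4r<2D_\kappa$ and the appeal to uniqueness to conclude that checking the single geodesic $[p,q]$ suffices for convexity.
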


\subsection{Convex Sets}

Convexity can have several meaning in the context of metric spaces (\emph{cf.}, \emph{e.g.} \cite[p.403]{Chavel}).
\begin{defn}[Convexity]
A subset $S$ of $\mathcal{M}$ is said \emph{convex} when for every couple of points $(x,y)\in S^2$, every geodesic segment $\gamma$ joining $x$ and $y$ in $(\mathcal{M},d)$ is such that $\gamma\subset S$.
\end{defn}

The notion of convex hull is going to be useful in the sequel.
\begin{defn}[Convex Hull]
Assume $S$ is a subset of $\mathcal{M}$. Then the \emph{convex hull} of $S$, denoted $\conv(S)$ is the intersection of all closed convex sets containing $S$.
\end{defn}
One can easily check that $\conv(S)$ is indeed convex (and closed).

\begin{prop}
If $(\mathcal{M},d)$ is $CAT(0)$, then for each $x_0\in\mathcal{M}$ and $r\geq 0$, every ball $B_{x_0,r}=\{x\in\mathcal{M}: d(x,x_0)\leq r\}$ is convex.
\end{prop}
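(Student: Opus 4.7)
The plan is to reduce the claim directly to the analogous (elementary) fact in the Euclidean plane by way of the $CAT(0)$ inequality. Given $x,y \in B_{x_0,r}$, the aim is to show that the unique geodesic segment $[x,y]$ (which exists and is unique by Proposition~\ref{prop:catk} applied with $\kappa=0$, since $D_0=+\infty$) stays inside $B_{x_0,r}$. So pick an arbitrary $z\in[x,y]$ and try to bound $d(x_0,z)$ by $r$.

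To do this, I would form the geodesic triangle with vertices $p=x$, $q=y$, $r_*=x_0$ (renaming to avoid clashing with the radius). Since $\kappa=0$ the perimeter condition $<2D_0$ is vacuous, so a comparison triangle $(\bar p,\bar q,\bar{r}_*)$ exists in $\mathcal{M}_0^2=\mathcal{E}^2$ and is unique up to isometry. Let $\bar z$ be the point of $[\bar p,\bar q]$ with $\|\bar p-\bar z\|=d(x,z)$. Using the $CAT(0)$ inequality with one point chosen on the side $[p,q]$ (namely $z$) and the other chosen as the endpoint $x_0$ of side $[r_*,p]$, one obtains
\[
d(x_0,z) \;\leq\; \bar d(\bar{r}_*,\bar z).
\]

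Now the problem has been transplanted into Euclidean geometry, where balls are trivially convex. By construction $\|\bar{r}_*-\bar p\|=d(x_0,x)\leq r$ and $\|\bar{r}_*-\bar q\|=d(x_0,y)\leq r$, so the closed Euclidean ball $\{u\in\mathcal{E}^2:\|u-\bar{r}_*\|\leq r\}$ contains both $\bar p$ and $\bar q$; by convexity of Euclidean balls it contains the entire segment $[\bar p,\bar q]$, and in particular $\bar z$. Hence $\bar d(\bar{r}_*,\bar z)\leq r$, and combining with the previous inequality yields $d(x_0,z)\leq r$, i.e.\ $z\in B_{x_0,r}$. This closes the proof.

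No step is really an obstacle here: once the $CAT(0)$ inequality is in hand, the argument is essentially a tautology, with the mild bookkeeping point of correctly matching the sides of the triangle to the hypotheses of the inequality (the pair of sides on which the two comparison points lie must share a common vertex — here $x_0$ itself, viewed as an endpoint of the side $[x_0,x]$, plays this role). The degenerate cases $r=0$, $x=y$ or $x=x_0$ are immediate and need not be treated separately.
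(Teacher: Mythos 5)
Your argument is correct and is essentially the paper's own proof: both form the comparison triangle for $(x_0,x,y)$ in $\mathcal{E}^2$, apply the $CAT(0)$ inequality to bound $d(x_0,z)$ by the comparison distance $\bar d(\bar x_0,\bar z)$, and then invoke the convexity of Euclidean balls (equivalently, $\bar d(\bar x_0,\bar z)\leq\max(d(x_0,x),d(x_0,y))\leq r$). Your version just spells out the bookkeeping that the paper compresses into one line.
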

\begin{proof}
Consider $x,y\in B_{x_0,r}$ and a comparison triangle $\Delta(x_0,x,y)$. Then for each $z\in[x,y]$, CAT(0) inequality implies $d(x_0,z)\leq \max(d(x_0,x),d(x_0,y))\leq r$. Hence $z\in B_{x_0,r}$, which finishes the proof.
\end{proof}

\bibliographystyle{alpha}
\bibliography{bibliog}
\label{sec:bibliog}

\end{document}